\numberwithin{equation}{section}
\pgfplotsset{compat=1.18}
\newcommand{\norm}[1]{\left\|#1\right\|}
\newcommand{\ix}{\int_{0}^{x}}
\newcommand{\tl}{\tilde}
\newcommand{\N}{{\mathbb{N}}}
\numberwithin{equation}{section}
 \newtheorem{theorem}{Theorem}[section]
 \newtheorem{cor}[theorem]{Corollary}
 \newtheorem{defn}[theorem]{Definition}
 \newtheorem{lem}[theorem]{Lemma}
 \newtheorem{prop}[theorem]{Proposition}
 \newtheorem{rem}[theorem]{Remark}
 \newcommand{\pa} {\partial}
 \newcommand\dis{\displaystyle}
 \newcommand\inter[1]{\llbracket #1\rrbracket}
 \renewcommand{\leq}{\leqslant}
 \def\hmath$#1${\texorpdfstring{{\rmfamily\textit{#1}}}{#1}}
 \newcommand{\mc}{\mathcal}
 \def\textmatrix#1&#2\\#3&#4\\{\bigl({#1 \atop #3}\ {#2 \atop #4}\bigr)}
 \def\dispmatrix#1&#2\\#3&#4\\{\left({#1 \atop #3}\ {#2 \atop #4}\right)}
 \theoremstyle{remark}
 \newsavebox{\savepar}		 
\date{\today}
\begin{document}
\title[Event-triggered control for FHN]{Event-triggered boundary control of the linearized FitzHugh-Nagumo equation$^{*}$}

\author[Víctor Hernández-Santamaría]{Víctor Hernández-Santamaría$^{\ddagger}$}
\author[Subrata Majumdar]{Subrata Majumdar$^\dagger$}
\author[Luz de Teresa]{Luz de Teresa}
\thanks{ Instituto de Matem\'{a}ticas, Universidad Nacional Aut\'{o}noma de M\'{e}xico, Circuito Exterior C.U., 04510, CDMX, Mexico (emails: \texttt{victor.santamaria@im.unam.mx}, \texttt{subrata.majumdar@im.unam.mx}, \texttt{ldeteresa@im.unam.mx})}
\thanks{$^{*}$This work has received support from Project A1-S-17475 of CONAHCYT, Mexico.}
\thanks{$^{\ddagger}$V. Hern\'andez-Santamar\'ia is supported by the program ``Estancias Posdoctorales por México para la Formación y Consolidación de las y los Investigadores por México'' of CONAHCYT (Mexico). He also received support from Project CBF2023-2024-116 of CONAHCYT and by UNAM-DGAPA-PAPIIT grants IN109522, IA100324 and IN102925 (Mexico). }
\thanks{$^\dagger$Subrata Majumdar is supported by the UNAM Postdoctoral Program (POSDOC)}
%\thanks{$^*$Corresponding author:}
\keywords{FitzHugh-Nagumo equation, Backstepping method, Event-triggered control}
\subjclass[2020]{}
%\medskip
%{\footnotesize

\medskip

%The abstract of your paper
\begin{abstract}
In this paper, we address the exponential stabilization of the linearized FitzHugh-Nagumo system using an event-triggered boundary control strategy. Employing the backstepping method, we derive a feedback control law that updates based on specific triggering rules while ensuring the exponential stability of the closed-loop system. We establish the well-posedness of the system and analyze its input-to-state stability in relation to the deviations introduced by the event-triggered control. Numerical simulations demonstrate the effectiveness of this approach, showing that it stabilizes the system with fewer control updates compared to continuous feedback strategies while maintaining similar stabilization performance.
\end{abstract}
	\maketitle
%\tableofcontents

%The title of your section 1

\section{Introduction}

\subsection{Motivation} In control systems, particularly when implemented on digital platforms, a key challenge is balancing performance with resource usage. Traditional digital control schemes, such as sampled-data control, typically rely on periodic updates of the control signal. However, this can lead to unnecessary computational effort, especially when the system state evolves slowly. This issue is even more critical in networked control systems, where excessive communication can exhaust available resources (see, for example, \cite{heemels2012introduction} for a more detailed introduction in the finite-dimensional case, and \cite{KK18,KFS19} for applications in the parabolic setting).

Event-triggered control has emerged as an effective alternative, performing updates only when a predefined condition (based on the system state) is fulfilled. This approach allows the system to remain stable while minimizing control updates and preserving resources. The approach has shown promising results in finite-dimensional systems modelled by ordinary differential equations (ODEs) \cite{tabuada2007event, postoyan2014framework} and, more recently, in systems described by partial differential equations (PDEs) \cite{espitia2021event,LB, koudohode2022event,baudouin2023event,WJ,KEK24}.

In the context of PDEs, the most common stabilization results rely on controlling from the boundary of the equation. The backstepping technique (see the seminal work \cite{krstic2008boundary}) offers a systematic approach for designing stabilizing feedback laws applicable to a wide range of systems. One of its key advantages is that this method is robust and adaptable across various frameworks, having been successfully applied to numerous equations and models while ensuring global stability. We refer to the non-exhaustive list \cite{CN17,GLM21,CHXZ22,Aur24,LM24,dAVKK24,HL24,PA24} and the references therein for recent progress and works in this direction.

Building on this framework, we consider the application of event-triggered control to a system known as the FitzHugh-Nagumo model. We employ the backstepping feedback as a starting point and apply the event-triggered control strategy to a linearized version of the model. As established in prior work (see \cite{chowdhury2024local}), this linearization possesses specific stabilization properties, most notably the restriction that stabilization can only be achieved for decay parameters determined by the system's coefficients rather than arbitrary values. We extend this analysis to the event-triggered setting, demonstrating that the system remains stable and that the stabilization rate can be made arbitrarily close to that of the continuous control case.

\subsection{Setting of the problem}\label{setting of the problem}

Let us consider the following nonlinear coupled ODE-PDE reaction-diffusion system in the interval $(0,1)$ with non-monotone non-linearity $I_{\text{ion}}(\cdot,\cdot)$ of FitzHugh-Nagumo type 
\begin{equation}\label{ODEPDE}
	\begin{cases}
		\pa_t v=\pa_{x}^2v+I_{\text{ion}}(v, w) & \text{ in }   (0,\infty) \times (0,1),\\
		\pa_tw=\gamma v-\delta w & \text{ in }   (0,\infty) \times (0,1),\\
		v(t,0)=0, \ \ v(t,0)=q(t) &\text{ in }   (0,\infty),\\
		v(0,x)=v_0(x), \ w(0,x)=w_0(x)\ \ & \text{ in }   (0,1).
	\end{cases}
\end{equation}
In \eqref{ODEPDE}, $v=v(x,t)$ and $w = w(x,t)$ are the state variables while $q \in L^2(0,\infty)$ is a boundary control. The nonlinearity is of the form:
\begin{align*}
	I_{\text{ion}}(v, w)=-v(v-1)(v-a)-\rho w.\label{eq:FHNn}
\end{align*}
Here, the system parameters are $a \in (0,1)$, and $\gamma$, $\delta$, and $\rho$ are positive constants.

We refer the reader to the review article \cite{HS75} for a comprehensive study of this mathematical model in neurobiology. This model describes the conduction of electrical impulses in a nerve axon (see also \cite{RM94}) and is also known in cardiac electrophysiology as the monodomain equations (see \cite{PBW91}).

In this paper, we are interested in studying the exponential stabilization by means of an event-triggered control of the following linearized version (around the origin) of \eqref{ODEPDE} 
\begin{equation}\label{FHNlin}
	\begin{cases}
		\pa_t v=\pa_{x}^2v-av-\rho w&\text{ in } (0,\infty) \times (0,1),\\
		\pa_t w=\gamma v-\delta w &\text{ in }  (0,\infty) \times (0,1),\\
		v(t,0)=0,\,\, v(t,1)=q(t)& \text{ in } (0,\infty),\\
		v(0,x)= v_0(x) \quad w(0,x)= w_0(x)& \text{ in } (0, 1).
	\end{cases}
\end{equation}
We begin by recalling the notion of exponential stabilization by feedback control.
\begin{defn}\label{def:exp}
 Let $(v_0, w_0)\in L^2(0,1)\times L^2(0,1)$. The system \eqref{FHNlin} is called exponential stabilizable by feedback control in the space $L^2(0,1)\times L^2(0,1)$ with a decay rate $\omega>0,$ if there exists a  bounded linear map $\Pi: L^2(0,1)\times L^2(0,1) \to \mathbb{R}$ such that, the solution $(v,w)$ of \eqref{FHNlin} with control of the form $q(t)=\Pi(v,w)(t,\cdot)$
 satisfies
	$$ 	\norm{v(t)}_{L^2(0,1)}+\norm{w(t)}_{L^2(0,1)}\leq C e^{-\omega t}\left(\norm{v_0}_{L^2(0,1)}+\norm{w_0}_{L^2(0,1)}\right),$$ for all $t>0$, for some positive constant $C$ independent of $u_0$ and $w_0$.
\end{defn}
 It is well-known that the system \eqref{FHNlin} is not exponentially stabilizable in the sense of \Cref{def:exp} with a decay rate $e^{-\omega t}$ if $\omega>\delta$ (see \cite[Corollary 3.13]{chowdhury2024local}). This limitation arises due to the presence of the ODE component in \eqref{FHNlin}, which acts as a memory term, preventing any improvement in the decay rate beyond the critical value $\delta$. See \cite{Ch12}, \cite{Chowdhury2}, and \cite{Kkjmpa} for this kind of obstruction in related control problems.

Despite the obstruction to exponential stabilization when $\omega>\delta$, feedback control still plays a role in enhancing the system's stability with decay rate $\omega\leq \delta$. By direct computations, it can be checked that if $q=0$, system \eqref{FHNlin} is exponentially stable with decay rate $\omega=\min\{a, \delta\}$. On the other hand, it has been proved in \cite{chowdhury2024local} that using the method of backstepping, we can go up to the decay $e^{-\omega t}$ for any $\omega\leq \delta$. According to these facts, if $\delta < a$, system \eqref{FHNlin} with $q = 0$ already exhibits the expected exponential stability. Consequently, the case $\delta > a$ is the most relevant for the design of the feedback control law, which is why we restrict our analysis to this scenario in the present paper.

The stabilization result in \cite{chowdhury2024local} is established by the  action of an explicit feedback control law of the form
\begin{equation}
\label{feedback1}
q(t)=v(t,1)=\int_{0}^{1}k(1,y)v(t,y)dy, \quad t>0,
\end{equation}
where $k$ is a suitable kernel (see \cref{kernel} in \Cref{sec:backstepping}). In this paper, employing an event-triggering approach, we replace the continuous control \eqref{feedback1} by 
\begin{equation*}
q_d(t)=v(t,1)=\int_{0}^{1}k(1,y)v(t_j,y) dy,
\end{equation*}
for all $t\in [t_j, t_{j+1}), j\geq 0$, where $\{t_j\}_j$ is a sequence of triggering times obeying a well-prescribed rule (see \Cref{defn-ev} below). The price to pay for applying this piecewise-constant control is that we are unable to achieve the critical decay rate $\delta$, but only the rate
\begin{equation}
e^{-\omega t} \quad\text{where}\quad \omega=\delta-\epsilon \quad  \forall \epsilon \in (0,\delta).
\end{equation}

\subsection{Outline of the paper}The rest of the paper is organized as follows. In \Cref{sec:backstepping}, we describe a preliminary structure of the backstepping approach along with the event-triggering control strategy. \Cref{wellp} contains well-posedness results for the event-triggered control system. \Cref{exp stab} is devoted to the description of the event-triggering rule, the avoidance of the occurrence of the Zeno solution and the main result (\Cref{main_theorem}) regarding the exponential stabilization of the linearized FHN system. Finally, in \Cref{sec:5}, we conclude the paper by introducing some numerics which illustrate the result of the paper.

%In this paper, we employ an event-triggering approach to find the exponential stabilization of the linearized FHN system. 

\section{Brief description on the structure of Backstepping under the Event-triggered strategy}\label{sec:backstepping}
Our main goal is to investigate the exponential stabilizability of the closed-loop linear FHN system \eqref{FHNlin} with an event-triggered approach. %Specifically, we adopt the emulation method, meaning the boundary control \eqref{feedback1} is assumed to be perfectly known.
This method involves stabilization based on events by sampling the feedback control law \eqref{feedback1} for the continuous-time backstepping case at specific time instants, which form an increasing sequence $\{t_j\}_{j\in \N},$ along with $t_0=0$. These time instants will be characterized later through a triggering rule.  Between two consecutive time instants, the control value remains constant and is only updated when a state-dependent condition is satisfied. %The control value remains constant between consecutive time instants and is updated when a state-dependent condition is met. 

In this context, we will use the boundary control of the form 
\begin{equation}\label{con tr}
v(t,1)=q_d(t)=\int_{0}^{1}k(1,y)v(t_j,y) dy,
\end{equation}for all $t\in [t_j, t_{j+1}), j\geq 0,$ where $k$ satisfies the following wave equation in the triangle $\mathcal{T}=\{(x,y)\in [0,1]\times [0,1]: 0\leq y \leq x\leq 1\}$
\begin{equation}\label{kernel}
	\begin{cases}
		\pa_{x}^2 k(x,y)-\pa_{y}^2 k(x,y)-(\lambda-a)k(x,y)=0 & 0< y< x<1,\\
		2\dfrac{d}{dx}k(x,x)+(\lambda-a)=0& 0\leq x\leq 1,\\
		k(x,0)=0, & 0\leq x\leq 1,
	\end{cases}
\end{equation}
where $\lambda$ is a positive constant. Thanks to Lemma 2.2 of \cite{Liu03} and Lemma 4.4 of \cite{Liubook}, we see that the equation \eqref{kernel} has a unique $\mc C^2$ solution. Furthermore, we can write the solution $k$ in the following series expansion
\begin{equation*}
	k(x,y)=-\sum\limits_{n=0}^{\infty}\left(\frac{\lambda-a}{4}\right)^{n+1}
	\frac{2y(x^2-y^2)^n}{\left({n!}\right)^2(n+1)},
\end{equation*}
and thanks to \cite[Chapter 4]{Kr08}, we also have the following representation 
\begin{equation}\label{kernel bes}
k(x,y)=(a-\lambda)y\frac{\mc I_1\left(\sqrt{(a-\lambda)(y^2-x^2)}\right)}{\sqrt{(a-\lambda)(y^2-x^2)}},
\end{equation}
where $\mc I_1$ is the Bessel function given by $\mc I_1=\sum\limits_{n=0}^{\infty}\frac{(\frac{x}{2})^{1+2n}}{n! (n+1)!}.$

\noindent
Note that, the expression of the control $q_d$ in \eqref{con tr} can be written in the following fashion
\begin{equation}
q_d(t)=\underbrace{\int_{0}^{1}k(1,y)v(t,y)dy}_{q(t)}+d(t),
\end{equation}
where $d(t)$ is the difference between the event triggering control and the continuous-time backstepping control at $t\in [t_j, t_{j+1})$
\begin{equation}\label{d}
d(t)=\int_{0}^{1}k(1,y)v(t_j,y)dy-\int_{0}^{1}k(1,y)v(t,y) dy.\end{equation}
Let us formulate the concerned system with an event-triggered control strategy $\forall j\geq 0$
\begin{equation}\label{FHNlin ET}
	\begin{cases}
		\pa_t v=\pa_{x}^2v-av-\rho w&\text{ in } (t_j, t_{j+1}) \times (0,1),\\
		\pa_t w=\gamma v-\delta w &\text{ in }  (t_j, t_{j+1}) \times (0,1),\\
		v(t,0)=0,\,\, v(t,1)=q_d(t)=\int_{0}^{1}k(1,y)v(t_j,y) dy& \text{ in } (t_j, t_{j+1}),
	\end{cases}
\end{equation}
and initial conditions $	v(0,x)= v_0(x) \quad w(0,x)= w_0(x), \text{ in } (0, 1).$
Next, we employ the well-known Backstepping method for the event-triggered boundary control system \eqref{FHNlin ET}. For that, let us introduce the Volterra integral transformation of the second kind $\Pi:L^2(0,1) \to L^2(0,1)$ is defined by 
\begin{equation}\label{volterra}
	(\Pi\sigma)(x)= \sigma(x)-\ix k(x,y)\sigma(y) dy, \quad x \in [0, 1],\, \sigma \in L^2(0,1),
\end{equation}
where the kernel function $k$ is the solution of the equation \eqref{kernel}. 
%Using Lemma 2.2 of \cite{Liu03} and Lemma 4.4 of \cite{Liubook}, we see that the equations \eqref{kernel} has unique $\mc C^2$ solution. 
Note that $\Pi : L^{2}(0,1) \to L^{2}(0, 1)$ and $\Pi^{-1} : L^{2}(0,1) \to L^{2}(0, 1)$  are linear bounded operators (see Lemma $2.4$ in \cite{Liu03}).
The inverse of the transformation \eqref{volterra} is given by 
\begin{equation}\label{inverse_kernel}
	(\Pi^{-1} \hat \sigma)(x)= \hat\sigma(x)+\ix l(x,y)\hat \sigma(y) dy, \quad x \in [0, 1],
\end{equation}
where $l$ is the solution of the following equation
\begin{equation}\label{l}
	\begin{cases}
		\pa_{x}^2 l(x,y)-\pa_{y}^2l(x,y)+(\lambda-a)l(x,y)=0 & 0< y< x< 1,\\
		2\dfrac{d}{dx}l(x,x)+(\lambda-a)=0 & 0\leq x\leq 1,\\
		l(x,0)=0 & 0\leq x\leq 1.
	\end{cases}
\end{equation}
%	\end{comment}
Using the transformation \eqref{volterra}, let us define 
\begin{equation}\label{ch var}
	\begin{cases}
		u(t,\cdot)=\Pi v(t,\cdot),\\
		z(t,\cdot)=\Pi w(t,\cdot).
	\end{cases}
\end{equation}
Next, one can show that the system \eqref{FHNlin ET} can be transformed to the following target system
\begin{equation}\label{FHNtargetvariable}
\begin{cases}
	%\begin{aligned}
	 \pa_t u-\pa_{x}^2u+\lambda  u+ \rho z =0&\text{ in } (t_j, t_{j+1}) \times (0,1),\\
	\pa_t z=\gamma u-\delta z &\text{ in } (t_j, t_{j+1}) \times (0,1),\\
	u(t,0)= 0, \quad u( t,1)=d(t) & \text{ in } (t_j, t_{j+1}),\\
	 u(0,x)= u_0(x), \quad  z(x, 0)= z_0(x)& \text{ in } (0,1).
	%\end{aligned}
\end{cases}
\end{equation}
where $\lambda>0$ is called the damping parameter, to be chosen as sufficiently large, $(u,z)$, $d$ are given by \eqref{ch var} and \eqref{d}, respectively and $u_0, z_0$ are the following
\begin{equation*}
	\begin{cases}
		u_0(x)=\Pi v_0=v_0(x)-\int_{0}^{x}k(x,y)v_0(y) dy,\\
		z_0(x)=\Pi w_0=w_0(x)-\int_{0}^{x}k(x,y)w_0(y) dy.
	\end{cases}
\end{equation*}
In order to establish the exponential stabilizability of the system \eqref{FHNlin ET}, we first prove that the target system \eqref{FHNtargetvariable} is exponentially stable and then, thanks to the invertibility of the Volterra transformation \eqref{volterra}, we get the stabilization result for \eqref{FHNlin ET}. It is worth mentioning that, in the continuous backstepping case instead of the target system \eqref{FHNtargetvariable} with nonhomogeneous boundary data at the right Dirichlet end, we have a similar system with homogeneous boundary data, and thus one can obtain the exponential stability with critical exponential decay rate up to $e^{-\delta t}$, see \cite{chowdhury2024local} for more details. For the event-triggering case, due to the presence of a nonhomogeneous boundary data $u(t,1)=d(t)$ in the target system \eqref{FHNtargetvariable}, we get the exponential stability of the system \eqref{FHNtargetvariable} with a decay rate $e^{-\omega t}$ when $\omega<\delta.$ This fact essentially implies the same result for the original event-triggered system \eqref{FHNlin ET}.

\section{Well-posedness result}\label{wellp}

Let $\{t_j\}_{j\in J}$, $J\subset \mathbb N$, be an increasing sequence of times. We start by introducing the maximal time $T$ under which the system \eqref{FHNlin ET} possess a solution
\begin{equation}
T:=\begin{cases}\infty, & \text{ if } \{t_j\} \text{ is a finite sequence },\\
\limsup\limits_{j\to \infty} t_j & \text{ otherwise}. 
\end{cases}
\end{equation}
\begin{theorem}\label{wellposed 1}
	For every $\left(v(t_j), w(t_j)\right)\in L^2(0,1)\times L^2(0,1)$, system \eqref{FHNlin ET} possess a unique solution $(v,w)\in \mc C^0([t_j, t_{j+1}]; L^2(0,1)\times L^2(0,1))\cap L^2(t_j, t_{j+1}; H^1(0,1)\times L^2(0,1)).
 $ %with $\left(v(t),w(t)\right)\in  H^1(0,1)\times L^2(0,1),$ for all $t\in (t_j, t_{j+1}].$
\end{theorem}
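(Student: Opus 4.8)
The plan is to solve the system on the single interval $[t_j,t_{j+1}]$ by an abstract semigroup argument, exploiting the fact that on this interval the boundary datum is \emph{frozen}: since $q_d(t)=\int_0^1 k(1,y)v(t_j,y)\,dy$ depends only on $v(t_j)$, it equals a fixed constant $c_j$ for every $t\in[t_j,t_{j+1})$. First I would homogenize the boundary conditions with the (time-independent) lift $\psi(x)=c_j x$, which satisfies $\psi(0)=0$, $\psi(1)=c_j$ and $\psi''=0$. Setting $\bar v:=v-\psi$, the pair $(\bar v,w)$ solves
\begin{equation*}
\pa_t\bar v=\pa_x^2\bar v-a\bar v-\rho w-a\psi,\qquad \pa_t w=\gamma\bar v-\delta w+\gamma\psi,
\end{equation*}
now with the homogeneous Dirichlet conditions $\bar v(t,0)=\bar v(t,1)=0$ and initial data $(\bar v(t_j),w(t_j))=(v(t_j)-\psi,\,w(t_j))\in L^2(0,1)\times L^2(0,1)$.

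Next I would recast this as an abstract Cauchy problem on the Hilbert space $\mathcal H:=L^2(0,1)\times L^2(0,1)$, writing $U=(\bar v,w)$ and
\begin{equation*}
\frac{d}{dt}U=\mathcal A U+F,\qquad U(t_j)=(v(t_j)-\psi,\,w(t_j)),
\end{equation*}
where $\mathcal A=\bigl(\begin{smallmatrix}\pa_x^2-a & -\rho\\ \gamma & -\delta\end{smallmatrix}\bigr)$ with domain $D(\mathcal A)=(H^2(0,1)\cap H^1_0(0,1))\times L^2(0,1)$ and $F=(-a\psi,\,\gamma\psi)\in\mathcal H$ is a fixed, time-independent source. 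The key structural observation is that the principal part $\mathrm{diag}(\pa_x^2,0)$ generates a $C_0$-semigroup on $\mathcal H$ (analytic on the $\bar v$-component via the Dirichlet Laplacian, and trivial on the $w$-component), while the zeroth-order coupling $\bigl(\begin{smallmatrix}-a & -\rho\\ \gamma & -\delta\end{smallmatrix}\bigr)$ is a bounded operator on $\mathcal H$. By the bounded-perturbation theorem $\mathcal A$ is itself a generator of a $C_0$-semigroup, so the variation-of-constants formula yields a unique mild solution $U\in\mathcal C^0([t_j,t_{j+1}];\mathcal H)$; undoing the lift gives $(v,w)\in\mathcal C^0([t_j,t_{j+1}];L^2(0,1)\times L^2(0,1))$.

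To upgrade to the parabolic regularity $v\in L^2(t_j,t_{j+1};H^1(0,1))$ I would run the standard energy estimate: multiply the $\bar v$-equation by $\bar v$, integrate over $(0,1)$, and use the homogeneous Dirichlet condition to obtain $\int_0^1\pa_x^2\bar v\,\bar v\,dx=-\norm{\pa_x\bar v}_{L^2(0,1)}^2$; the remaining terms $-a\bar v-\rho w-a\psi$, together with the $w$-equation, are controlled by Cauchy--Schwarz and Young's inequality. Grönwall's lemma then produces an estimate of the form
\begin{equation*}
\sup_{t\in[t_j,t_{j+1}]}\bigl(\norm{\bar v(t)}_{L^2(0,1)}^2+\norm{w(t)}_{L^2(0,1)}^2\bigr)+\int_{t_j}^{t_{j+1}}\norm{\pa_x\bar v}_{L^2(0,1)}^2\,dt\leq C,
\end{equation*}
with $C$ depending on the data and on $\norm{\psi}_{H^1(0,1)}$, giving $\bar v\in L^2(t_j,t_{j+1};H^1_0(0,1))$ and hence $v\in L^2(t_j,t_{j+1};H^1(0,1))$, while $w\in\mathcal C^0\subset L^2$ in time with values in $L^2(0,1)$, as required. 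Uniqueness is then immediate from linearity: the difference of two solutions solves the homogeneous problem with zero data, and the same energy estimate forces it to vanish.

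The main obstacle is the absence of smoothing in the $w$-equation, which is a pure ODE in $t$ parametrized by $x\in(0,1)$: the component $w$ only inherits $L^2$-regularity in space and cannot be expected to gain derivatives. Consequently the coupling term $-\rho w$ must be treated merely as an $L^2(0,1)$-valued source in the parabolic $\bar v$-equation, and the argument has to be arranged so that the parabolic estimate for $\bar v$ never demands more than $L^2$-regularity of $w$. A secondary, but routine, point is verifying that the energy estimate closes in the presence of the frozen source $F$ coming from the boundary lift, which is immediate since $\psi\in H^1(0,1)$.
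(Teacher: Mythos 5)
Your proof is correct, and its skeleton coincides with the paper's: both exploit that $q_d$ is frozen on $[t_j,t_{j+1})$, homogenize the Dirichlet data by an affine-in-$x$ lift, solve the resulting homogeneous-boundary system, and undo the lift. The differences are in the details. First, the lift itself: the paper shifts \emph{both} components, $v=\bar v+\bar q x$ and $w=\bar w+\tfrac{\bar q\gamma}{\delta}x$, tuned so that the ODE for $\bar w$ stays source-free and the entire inhomogeneity $-\bar q\left(a+\tfrac{\rho\gamma}{\delta}\right)x$ lands in the parabolic equation; you shift only $v$, which leaves the extra (harmless, constant-in-time) source $\gamma\psi$ in the ODE and avoids the division by $\delta$. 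Second, and more substantively, the paper disposes of the homogeneous-boundary system by citing Lemma A.1 of \cite{chowdhury2024local}, whereas you prove its well-posedness from scratch: bounded perturbation of the generator $\mathrm{diag}(\partial_x^2,0)$, variation of constants for the mild solution, and an energy estimate to recover $\bar v\in L^2(t_j,t_{j+1};H^1_0(0,1))$ and uniqueness. Your version is self-contained and makes explicit where the $L^2(H^1)$ regularity comes from, and why $w$ can never gain spatial regularity; the paper's is shorter at the cost of an external reference. One small point to tidy up in your write-up: the energy identity is formally performed on a mild solution, so you should either invoke the smoothing of the analytic component (the mild solution is strict for $t>t_j$) or run the estimate for strong solutions with data in $D(\mathcal A)$ and pass to the limit by density --- routine, but worth a sentence.
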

\begin{proof}
We prove this result by establishing a similar result for a system with homogeneous boundary conditions and then performing a change of variable, we conclude the result for the main system \eqref{FHNlin ET}. Note that the boundary data for the right end (at $x=1)$ of $v$ is constant in the interval $(t_j, t_{j+1})$ and we denote $\bar q=q_d.$ Next, we consider the following system
\begin{equation}\label{FHNlin ET1}
	\begin{cases}
		\pa_t \bar v=\pa_{x}^2\bar v-a\bar v-\rho \bar w-\bar q\left(a+\frac{\rho \gamma}{\delta}\right)x &\text{ in } (t_j, t_{j+1}) \times (0,1),\\
		\pa_t \bar w=\gamma \bar v-\delta \bar w &\text{ in }  (t_j, t_{j+1}) \times (0,1),\\
		\bar v(t,0)=0,\,\, \bar v(t,1)=0& \text{ in } (t_j, t_{j+1}),
	\end{cases}.
\end{equation}
As it is given that $\left(v(t_j), w(t_j)\right)\in L^2(0,1)\times L^2(0,1)$, thanks to \cite[Lemma A.1]{chowdhury2024local}, the solution of \eqref{FHNlin ET1} lies in the space 
$\mc C^0([t_j, t_{j+1}]; L^2(0,1)\times L^2(0,1))\cap L^2(t_j, t_{j+1}; H^1(0,1)\times L^2(0,1))$. %with $\left(v(t),w(t)\right)\in  H^1(0,1)\times L^2(0,1),$ for all $t\in (t_j, t_{j+1}].$ 
Next taking, $v=\bar v+\bar q x, w=\bar w+\frac{\bar q \gamma}{\delta}x,$ $ \bar q=\int_{0}^{1}k(1,y)v(t_j,y) dy,$ one can show that $(v,w)$ satisfies \eqref{FHNlin ET} and $$(v,w)\in \mc C^0([t_j, t_{j+1}]; L^2(0,1)\times L^2(0,1))\cap L^2(t_j, t_{j+1}; H^1(0,1)\times L^2(0,1)).$$ 
\end{proof}
The above well-posedness result allows us to construct a solution $(v,w)$ for the system \eqref{FHNlin ET} in the time interval $[0, T).$
\begin{cor}\label{cor wellp}
	Let $(v_0, w_0)\in L^2(0,1)\times L^2(0,1).$ Then there exists unique mapping $(v,w)$ such that $(v,w)\in \mc C^0\left([0, T); L^2(0,1)\times L^2(0,1)\right)$ with $v(t)\in  H^1(0,1),$ for a.e. $t\in \left(0, T\right)$ and $(v,w)$ satisfies \eqref{FHNlin ET} in $(t_j, t_{j+1}), \forall j\geq 0.$ 
\end{cor}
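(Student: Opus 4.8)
The plan is to build the global solution on $[0,T)$ by concatenating the local solutions furnished by \Cref{wellposed 1} across consecutive triggering intervals, proceeding by induction on $j$. First I would set $(v(t_0), w(t_0)) = (v_0, w_0) \in L^2(0,1) \times L^2(0,1)$ and invoke \Cref{wellposed 1} on the first interval $[t_0, t_1]$ to obtain a unique solution $(v,w) \in \mc C^0([t_0,t_1]; L^2(0,1)\times L^2(0,1)) \cap L^2(t_0,t_1; H^1(0,1)\times L^2(0,1))$ of \eqref{FHNlin ET}. Because this solution is time-continuous with values in $L^2(0,1)\times L^2(0,1)$, its trace at $t_1$, namely $(v(t_1), w(t_1))$, again belongs to $L^2(0,1)\times L^2(0,1)$ and can legitimately serve as the initial datum on the next interval. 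Applying \Cref{wellposed 1} on $[t_1,t_2]$ with this datum and iterating, I obtain for every $j$ a solution on $[t_j, t_{j+1}]$, and defining $(v,w)$ piecewise through these pieces yields a function on $[0,T)$. When $\{t_j\}$ is a finite sequence the last piece is posed on the semi-infinite interval $[t_N, \infty)$, for which the same energy argument underlying \Cref{wellposed 1} applies verbatim since the boundary datum stays constant there.

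Next I would verify the claimed regularity. Continuity on all of $[0,T)$ reduces to continuity at each junction $t_j$: the value at $t_j$ coming from the interval $[t_{j-1}, t_j]$ coincides, by construction, with the initial datum prescribed for $[t_j, t_{j+1}]$, so no jump occurs in the state $(v,w)$, even though the boundary value $q_d$ is discontinuous there. Hence $(v,w) \in \mc C^0([0,T); L^2(0,1)\times L^2(0,1))$. The membership $v(t) \in H^1(0,1)$ for a.e.\ $t$ follows from the $L^2(t_j, t_{j+1}; H^1(0,1))$ regularity on each subinterval: this gives $v(t) \in H^1(0,1)$ for a.e.\ $t \in (t_j, t_{j+1})$, and taking the countable union over $j$ --- together with the fact that the set of triggering times $\{t_j\}$ has Lebesgue measure zero --- yields the property for a.e.\ $t \in (0,T)$.

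Uniqueness is inherited interval by interval: if $(\tl v, \tl w)$ were another solution of \eqref{FHNlin ET} on $[0,T)$ with the stated regularity, then on $[t_0,t_1]$ it would be a solution with datum $(v_0,w_0)$, hence equal to $(v,w)$ by the uniqueness in \Cref{wellposed 1}; in particular the two agree at $t_1$, and an induction on $j$ propagates the identification across every interval. The only genuinely delicate point is the compatibility of the traces at the junction times, i.e.\ ensuring that the time-continuity of each local piece makes the evaluation $(v(t_j),w(t_j))$ well defined and independent of the side from which it is computed; this is exactly what the $\mc C^0$-in-time conclusion of \Cref{wellposed 1} provides, so once that is in hand the gluing is routine.
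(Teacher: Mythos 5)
Your proposal is correct and takes essentially the same route as the paper: an inductive, interval-by-interval application of \Cref{wellposed 1}, using the $L^2$-trace of the solution at each triggering time $t_j$ as the initial datum on $[t_j,t_{j+1}]$ and gluing the pieces together. In fact your write-up is somewhat more careful than the paper's, which omits the explicit treatment of the junction continuity, the uniqueness propagation, and the case of a finite triggering sequence (where the last interval is semi-infinite).
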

\begin{proof}
	First let us consider that $(v_0, w_0)\in L^2(0,1)\times L^2(0,1).$ Then using \Cref{wellposed 1}, we get the existence of unique solution of \eqref{FHNlin ET} in $(0,t_1)$ denoted by $(v^0,w^0)$ and we have  $(v^0,w^0) \in \mc C^0([0, t_{1}]; L^2(0,1)\times L^2(0,1)).$ with $v^0(t) \in  H^1(0,1)$ for a.e. $t\in (0, t_{1}].$ 
 Note that we get $(v^0(t_1), w^0(t_1))\in L^2(0,1)\times L^2(0,1).$ Thus we start the system in the next interval $(t_1, t_2)$ with taking $(v^0(t_1), w^0(t_1))$ as the initial data and so on. That is, at each step, we have the following system for $(j\geq 1)$
	\begin{equation}\label{FHN wellp}
		\begin{cases}
			\pa_t v^j=\pa_{x}^2v^j-av^j-\rho w^j&\text{ in } (t_j, t_{j+1}) \times (0,1),\\
		\pa_t	w^j=\gamma v^j-\delta w^j &\text{ in }  (t_j, t_{j+1}) \times (0,1),\\
			v^j(t,0)=0,\,\, v^j(t,1)=\int_{0}^{1}k(1,y)v^j(t_j,y) dy& \text{ in } (t_j, t_{j+1}),\\
			v^j(t_j,x)=v^{j-1}(t_j,x), \, 	w^j(t_j,x)=w^{j-1}(t_j,x)&  \text{ in } (0,1).
		\end{cases}
	\end{equation}
and we get the existence of unique solution $(v^j, w^j)\in \mc C^0([t_j, t_{j+1}]; L^2(0,1)\times L^2(0,1))$ with $\left(v^j(t),w^j(t)\right)\in  H^1(0,1)\times L^2(0,1),$ for a.e $t\in (t_j, t_{j+1}]. $ 
With this step-by-step construction, we finally have the solution $(v,w)=(v^j, w^j)$ in $[t_j, t_{j+1}]$ with the desired regularity.
\end{proof}
\section{Exponential stabilizability}\label{exp stab}
The main goal of this section is to present the event-triggered boundary control and its primary outcomes: avoidance of the Zeno phenomenon and the exponential stability of the event-triggered controlled system. The event-triggered boundary control discussed in this paper includes a triggering condition, which determines the moments when the controller needs to be updated, and a backstepping-based boundary feedback law, applied at the right end of the Dirichlet boundary for the parabolic component. The proposed event-triggering condition is based on the evolution of the $L^2$ norms of the states of both the parabolic and the ODE components.
\subsection{Event-triggered rule}
Let us denote $V(t)=\norm{v(t)}+\norm{w(t)},$ where $(v,w)$ is the solution of the equation \eqref{FHNlin ET}.
\begin{defn}\label{defn-ev}
Let $\beta>0$ be a parameter that will be chosen later. Let us recall the kernel $k$ defined in \eqref{kernel} and the control $d$ in \eqref{d}. Let us define the following set:
\begin{equation}\label{tr rule}
E(t_j):=\{t>0: t>t_j \text{ and } |d(t)|> \beta \norm{k(1,\cdot)}V(t)+\beta \norm{k(1,\cdot)}V(t_j)\}.
\end{equation}
The event-triggered boundary control is defined by considering the following  components: 

\begin{enumerate}
	\item (The event-trigger) The times of the events $t_j\geq 0$  with $t_0=0$  form a finite or countable set of times  which is determined by the following rules for some $j\geq0$: \\
	\begin{itemize}
		\item [a)] if %$ \{ t>0: t >  t_{j}  \text{ and }   \vert d(t) \vert      >  \beta \Vert  k(1,\cdot) \Vert_{L^2(0,1)}  V(t)  +\beta \Vert  k(1,\cdot) \Vert_{L^2(0,1)}  V(t_{j})      \}
		$E(t_j) = \emptyset$  then the set of the times of the events is $\{t_{0},...,t_{j}\}$.\\ 
		\item [b)] if $ E(t_j)%\{ t>0: t >  t_{j}  \text{ and }   \vert d(t) \vert      >  \beta \Vert  k(1,\cdot) \Vert_{L^2(0,1)}  V(t)  +\beta \Vert  k(1,\cdot) \Vert_{L^2(0,1)}  V(t_{j})      \}
		\neq \emptyset$, then the next event time is given by: \begin{equation}\label{triggering}t_{j+1}=\inf E(t_j)
		\end{equation}
	\end{itemize}
	
	\item (The control action)  The boundary feedback law,
	\begin{equation}\label{controlfunction}
		q_d(t) =    \int_{0}^{1} k(1,y)v(t_{j},y) dy, \quad  \forall t \in [t_{j},t_{j+1}).
	\end{equation}
\end{enumerate}
\end{defn}
\subsection{Avoidance of Zeno behavior}
In this section, we will prove that the so-called Zeno phenomenon, which refers to an infinite number of triggers in a finite time interval, is avoided by ensuring minimal dwell time between two triggering instances. Let us first present the following intermediate result before moving to the finding of the existence of the minimal dwell time.
\begin{lem}\label{Estimate_of_supNorm}
	For the closed-loop system \eqref{FHNlin ET}, the following estimate holds, for all $t \in [t_{j},t_{j+1}]$, $j\geq 0$:
	\begin{equation}\label{upper_bound_supNormofU}
		\sup_{t_{j} \leq s \leq t_{j+1}}(\Vert v(s) \Vert + \Vert w(s) \Vert ) \leq M_j (\Vert v(t_j) \Vert + \Vert w(t_j) \Vert)
	\end{equation}
	where $M_j= \sqrt{2}e^{\frac{c}{2}(t_{j+1}- t_{j})}\left(1+   \Vert k(1,\cdot) \Vert + \frac{\Vert k(1,\cdot) \Vert}{\sqrt{c}}\right) +   \Vert k(1, \cdot) \Vert$ and $c$ is a positive constant depends on the system parameters.  
\end{lem}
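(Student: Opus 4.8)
The plan is to derive an energy-type estimate for the system \eqref{FHNlin ET} on the interval $(t_j,t_{j+1})$ and then convert it into the stated supremum bound via Gr\"onwall's inequality. First I would work with the original state $(v,w)$ rather than the target system, since the claimed constant $M_j$ involves $\Vert k(1,\cdot)\Vert$ directly, suggesting that the boundary data $q_d$ enters explicitly. The key observation is that on $[t_j,t_{j+1})$ the boundary value $v(t,1)=q_d(t)=\int_0^1 k(1,y)v(t_j,y)\,dy$ is a \emph{constant} (in $t$), and by Cauchy--Schwarz it satisfies $|q_d(t)|\leq \Vert k(1,\cdot)\Vert\,\Vert v(t_j)\Vert$. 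To handle the nonhomogeneous Dirichlet condition, I would use the lifting already introduced in the proof of \Cref{wellposed 1}, namely writing $v=\bar v+\bar q x$ and $w=\bar w+\frac{\bar q\gamma}{\delta}x$ with $\bar q=q_d$, so that $(\bar v,\bar w)$ solves the homogeneous-boundary system \eqref{FHNlin ET1}.

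Next I would test the first PDE against $v$ and the ODE against $w$ in $L^2(0,1)$, integrate by parts in $x$, and combine. Differentiating $\tfrac12\frac{d}{dt}\left(\Vert v(t)\Vert^2+\Vert w(t)\Vert^2\right)$ produces a dissipative term $-\Vert\pa_x v\Vert^2$ from the diffusion (after integrating by parts and using $v(t,0)=0$), boundary contributions at $x=1$ proportional to $q_d\,\pa_x v(t,1)$, and reaction/coupling terms $-a\Vert v\Vert^2$, $-\rho\langle w,v\rangle$, $\gamma\langle v,w\rangle$, $-\delta\Vert w\Vert^2$. The cross terms $-\rho\langle w,v\rangle+\gamma\langle v,w\rangle$ are bounded by $\tfrac{\rho+\gamma}{2}(\Vert v\Vert^2+\Vert w\Vert^2)$ using Young's inequality, which is the source of the constant $c$ depending on the system parameters. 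The boundary term is the delicate piece: using the lift, the $x=1$ contribution can be absorbed by the diffusion dissipation together with a term quadratic in $\bar q=q_d$, and since $|q_d|\leq \Vert k(1,\cdot)\Vert\,\Vert v(t_j)\Vert$ this injects a forcing term controlled by $\Vert k(1,\cdot)\Vert^2\Vert v(t_j)\Vert^2$.

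Collecting terms I expect a differential inequality of the form $\frac{d}{dt}V(t)^2\leq c\,V(t)^2 + C\Vert k(1,\cdot)\Vert^2\Vert v(t_j)\Vert^2$ on $(t_j,t_{j+1})$, or more conveniently for the square root $V(t)=\Vert v(t)\Vert+\Vert w(t)\Vert$ a bound of the form $\frac{d}{dt}V(t)\leq \tfrac{c}{2}V(t)+C'\Vert k(1,\cdot)\Vert\,\Vert v(t_j)\Vert$. Applying Gr\"onwall over $[t_j,t]$ with $t\leq t_{j+1}$ yields $V(t)\leq e^{\frac{c}{2}(t-t_j)}V(t_j)+$ (a term from the constant forcing integrated against the exponential), and the factor $\frac{1}{\sqrt c}$ in $M_j$ arises precisely from integrating the constant-in-time forcing $\int_{t_j}^{t}e^{\frac{c}{2}(t-s)}\,ds$. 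The explicit bookkeeping producing the exact shape $M_j=\sqrt2\,e^{\frac{c}{2}(t_{j+1}-t_j)}\bigl(1+\Vert k(1,\cdot)\Vert+\frac{\Vert k(1,\cdot)\Vert}{\sqrt c}\bigr)+\Vert k(1,\cdot)\Vert$ requires keeping track of the lift $\bar q x$ when passing between $V$ for $(v,w)$ and the energy for $(\bar v,\bar w)$, where $\Vert \bar q x\Vert\leq |\bar q|\leq \Vert k(1,\cdot)\Vert\Vert v(t_j)\Vert$ accounts for the additive $\Vert k(1,\cdot)\Vert$ terms and the $\sqrt2$ absorbs the equivalence between $\Vert v\Vert+\Vert w\Vert$ and $(\Vert v\Vert^2+\Vert w\Vert^2)^{1/2}$.

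The main obstacle I anticipate is controlling the boundary flux term $q_d\,\pa_x v(t,1)$ rigorously, since $\pa_x v(t,1)$ is not directly bounded by the energy; the lifting device circumvents this by removing the trace from the equation and transferring it into a bounded $L^2$ forcing term proportional to $\bar q$, at the cost of the extra reaction term $\bar q\bigl(a+\frac{\rho\gamma}{\delta}\bigr)x$ appearing in \eqref{FHNlin ET1}. Verifying that this forcing is uniformly bounded by $\Vert k(1,\cdot)\Vert\,\Vert v(t_j)\Vert$ and tracking its precise contribution through the Gr\"onwall step is the technical heart of the argument.
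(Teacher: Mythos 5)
Your proposal is correct and follows essentially the same route as the paper's proof: lift the constant-in-time boundary datum to obtain homogeneous Dirichlet conditions, run an $L^2$ energy/Gr\"onwall estimate with the forcing controlled via $|q_d|\leq \Vert k(1,\cdot)\Vert\,\Vert v(t_j)\Vert$, and return to $(v,w)$ by the triangle inequality. The only cosmetic differences are that the paper lifts just the parabolic component ($\tilde v=v-xq_d$, $\tilde w=w$), which is what produces the exact stated constant $M_j$ (the $1/\sqrt{c}$ factor arises from taking the square root of the quadratic Gr\"onwall bound $W(t)\leq e^{c(t_{j+1}-t_j)}\bigl(W(t_j)+\tfrac{1}{2c}q_d^2\bigr)$, not from integrating a forcing against the exponential), whereas your two-component lift from \Cref{wellposed 1} yields a constant of the same form but with additional $\gamma/\delta$ factors.
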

\begin{proof}
Let us first introduce the following change of variables
	\begin{equation}\label{change_of_variable}
		\begin{cases}
		\tl v(t,x) = v(t,x) - x q_d(t), \\
		\tl w(t,x)=w(t,x).
		\end{cases}
	\end{equation}
	It is easy to check that $(\tl v, \tl w)$ satisfies the following PDE for all $t \in (t_{j},t_{j+1})$, $j\geq 0$,
	\begin{equation}\label{sysparabolic}
		\begin{cases}
		\tl v_t =  \pa_{x}^2 \tl v- a \tl v -\rho \tl w -a  x q_d &\text{ in }  (t_j, t_{j+1}) \times (0,1), \\
		\tl w_t=\gamma \tl v-\delta \tl w+\gamma x q_d(t) &\text{ in }  (t_j, t_{j+1}) \times (0,1),\\
		\tl v(t,0)=0,\,\, \tl v(t,1)=0 & \text{ in } (t_j, t_{j+1}).
		%\tl v(0,x)= v_0(x)-xq_d \quad\tl  w(0,x)= w_0(x)& \text{ in } (0, 1). 
		\end{cases}
	\end{equation} 
	 Next, by considering the  function $W(t)=\frac{1}{2}\left(\Vert \tl v(t) \Vert^2+\Vert \tl w(t) \Vert^2\right)$    
	and taking its time derivative along the solutions of  \eqref{sysparabolic} and using Cauchy-Schwarz inequality for the coupling terms, we obtain, for $t\in (t_{j}, t_{j+1})$:
	\begin{align*}
		 \dot{W}(t) &\leq -\Vert \pa_x \tl v(t) \Vert^2-a\Vert \tl v(t) \Vert^2-\delta\Vert \tl w(t)\Vert^2 +{(\rho+\gamma)}W(t)+q_d\int_{0}^{1}(\gamma x \tl w-ax\tl v) dx.
		 \end{align*}  
	In addition, using the Young's inequality on the last term along with the Cauchy-Schwarz inequality, we get 
	\begin{equation*}
		\dot{W}(t) \leq c_1 W(t)  + \frac{1}{2}q_d^2 + c_2 W(t),
	\end{equation*} where $c_1, c_2$ are two positive constants.
	%where $q= \theta r^{''}(x) + \lambda r(x) $.
	Then, for $t \in (t_{j}, t_{j+1})$:
	\begin{equation*}
		\dot{W}(t) \leq c W(t) + \frac{1}{2}q_d^2,
	\end{equation*}
	where $c$ is a positive constant. Thanks to the Gronwall's inequality on an interval $[p,r]$ where $p > t_{j}$ and $r < t_{j+1}$, one gets, for all $t \in [p,r]$:
	\begin{equation*}
		W(t) \leq e^{c(t-p)}(W(p)+ \frac{1}{ 2c}q_d^2).
	\end{equation*}
	Due to the continuity of $W(t)$ on $[t_{j},t_{j+1}]$  and the fact that $p,r$ are arbitrary, we can conclude that
\begin{equation}\label{Estimate_functional_p_tk+1}
		W(t) \leq e^{c(t_{j+1}-t_{j})}\left(W(t_{j})+ \tfrac{1}{2 c}q_d^2\right)
	\end{equation}
	for all $t \in [t_{j},t_{j+1}]$. Applying the Cauchy-Schwarz inequality, we have that $\vert q_d \vert \leq \Vert k(1,\cdot) \Vert \Vert v(t_j) \Vert$. Using this fact in  \eqref{Estimate_functional_p_tk+1}, we get, in addition:
	\begin{equation*}
		\Vert \tl v(t) \Vert^2 + \Vert \tl w(t) \Vert^2\leq e^{c(t_{j+1}-t_{j})} \left(\Vert\tl v(t_j) \Vert^2 + \Vert\tl w(t_j) \Vert^2+\frac{1}{c} \Vert k(1,\cdot) \Vert^2 \Vert v(t_j) \Vert^2 \right),
	\end{equation*} 
which further gives:
 \begin{equation}\label{est1}
 	\Vert \tl v(t) \Vert + \Vert \tl w(t) \Vert\leq\sqrt{2} e^{\frac{c}{2}(t_{j+1}-t_{j})} \left(\Vert\tl v(t_j) \Vert + \Vert\tl w(t_j) \Vert+\frac{1}{\sqrt c} \Vert k(1,\cdot) \Vert \Vert v(t_j) \Vert \right).
 \end{equation}
thanks to the change of variables \eqref{change_of_variable} and the triangle inequalities, we obtain the following inequalities:  
	\begin{equation*}
		\begin{split}
			\Vert  v(t) \Vert \leq \Vert \tl v(t)\Vert +   \vert q_d\vert, \\
			\Vert \tl v(t_j) \Vert \leq \Vert  v(t_j)\Vert +   \vert q_d\vert,
		\end{split}
	\end{equation*}
	together with $\vert q_d \vert \leq \Vert k(1,\cdot) \Vert \Vert v(t_j) \Vert$. Thanks to \eqref{est1} and the above estimates, we obtain, for all $t \in [t_{j}, t_{j+1}]$,
	\begin{align*}
	(\Vert v(t) \Vert +\Vert w(t) \Vert)&\leq (\Vert \tl v(t) \Vert +\Vert k(1,\cdot) \Vert \Vert v(t_j) \Vert+\Vert \tl w(t) \Vert)\\
	&\leq \sqrt{2} e^{\frac{c}{2}(t_{j+1}-t_{j})} \left(\Vert\tl v(t_j) \Vert + \Vert\tl w(t_j) \Vert+\frac{1}{\sqrt c} \Vert k \Vert \Vert v(t_j) \Vert \right)+\Vert k(1,\cdot) \Vert \Vert v(t_j) \Vert\\
	& \leq \sqrt{2} e^{\frac{c}{2}(t_{j+1}-t_{j})} \left(\Vert v(t_j) \Vert +\Vert k(1,\cdot) \Vert \Vert v(t_j) \Vert+ \Vert w(t_j) \Vert+\frac{1}{\sqrt c} \Vert k(1,\cdot) \Vert \Vert v(t_j) \Vert \right)\\
 &\quad +\Vert k(1,\cdot) \Vert \Vert v(t_j) \Vert.
	\end{align*}
Therefore, finally, we deduce the following:
	\begin{equation*}
		\sup_{t_{j} \leq s \leq t_{j+1}}(\Vert v(s) \Vert +\Vert w(s) \Vert) \leq M_j (\Vert v(t_j) \Vert+\Vert w(t_j) \Vert)
	\end{equation*}
	with $M_j= \sqrt{2}e^{\frac{c}{2}(t_{j+1}- t_{j})}\left(1+   \Vert k(1,\cdot) \Vert + \frac{\Vert k(1,\cdot) \Vert}{\sqrt{c}}\right) +   \Vert k(1, \cdot) \Vert$.  This concludes the proof.
\end{proof}
\begin{theorem}\label{minimal_time}
	Under the event-triggered boundary control \eqref{tr rule}-\eqref{triggering}-\eqref{controlfunction},  there exists a minimal dwell-time between two triggering times, i.e. there exists a constant $\tau >0$ (independent of the initial condition $(v_0,w_0)$) such that $t_{j+1} - t_{j} \geq \tau $,  for all $j \geq 0$.
\end{theorem}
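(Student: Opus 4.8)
The plan is to show that the deviation $d$ defined in \eqref{d} cannot grow from its initial value $d(t_j)=0$ up to the triggering threshold in \eqref{tr rule} in arbitrarily short time. Since $t_{j+1}=\inf E(t_j)$, the continuity of $t\mapsto d(t)$ and of $V$ forces, at a triggering time,
\[
|d(t_{j+1})|\ge \beta\Vert k(1,\cdot)\Vert\big(V(t_{j+1})+V(t_j)\big)\ge \beta\Vert k(1,\cdot)\Vert\,V(t_j).
\]
Hence it suffices to produce a bound $|d(t)|\le \eta(t-t_j)\,V(t_j)$ on $[t_j,t_{j+1}]$ with $\eta$ continuous, increasing, vanishing at $0$, and \emph{independent} of $j$ and of the data: then $\eta(t_{j+1}-t_j)\ge \beta\Vert k(1,\cdot)\Vert$ yields $t_{j+1}-t_j\ge \eta^{-1}\!\big(\beta\Vert k(1,\cdot)\Vert\big)=:\tau>0$. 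The degenerate case $V(t_j)=0$ is trivial, since then the state and the control vanish identically on $[t_j,\infty)$, so $E(t_j)=\emptyset$ and no further event is triggered.

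To construct $\eta$ I would differentiate $d$. Writing $k_1(\cdot)=k(1,\cdot)$ and recalling that the control is constant on $(t_j,t_{j+1})$, we have $\dot d(t)=-\int_0^1 k_1(y)\,\partial_t v(t,y)\,dy$. Substituting the equation for $v$ in \eqref{FHNlin ET} and integrating by parts once, using $k_1(0)=0$ (from \eqref{kernel}) and $v(t,0)=0$, gives
\[
\dot d(t)=-k_1(1)\,\partial_x v(t,1)+\int_0^1 k_1'(y)\,\partial_x v(t,y)\,dy+a\int_0^1 k_1(y) v(t,y)\,dy+\rho\int_0^1 k_1(y) w(t,y)\,dy .
\]
The last three terms are harmless: they are bounded by $C\big(\Vert \partial_x v(t)\Vert+V(t)\big)$, and \Cref{Estimate_of_supNorm} lets me replace $V(t)$ by $M_j V(t_j)$, which I bound by a uniform $\bar M\,V(t_j)$ after reducing, without loss of generality, to intervals with $t_{j+1}-t_j\le 1$ (longer intervals already satisfy the dwell-time bound with $\tau=1$).

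The delicate term is the boundary flux $k_1(1)\,\partial_x v(t,1)$, where $k_1(1)=\tfrac{a-\lambda}{2}\neq 0$; it is not controlled pointwise by the $L^2$--$H^1$ regularity of the solution, so the main work is to estimate it in $L^2$ in time. Two ingredients make this possible. First, integrating the energy identity already used in the proof of \Cref{Estimate_of_supNorm} over the interval gives $\Vert \partial_x v\Vert_{L^2(t_j,t_{j+1};L^2)}\le C\,V(t_j)$, which handles the interior term. Second, and this is the heart of the argument, a hidden (sharp) trace regularity estimate for the parabolic component—obtained by the multiplier $x\,\partial_x v$ after the change of variables $\tilde v=v-x q_d$ that homogenizes the boundary data, with $-\rho\tilde w-a x q_d$ treated as an $L^2$ source—yields $\partial_x v(\cdot,1)\in L^2(t_j,t_{j+1})$ with $\Vert \partial_x v(\cdot,1)\Vert_{L^2(t_j,t_{j+1})}\le C\,V(t_j)$. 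I expect this trace estimate to be the main obstacle, since for merely $L^2$ data the normal derivative is not defined pointwise in time, and the bound must be derived either through this multiplier computation or by density from smooth data.

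Combining these bounds gives $\Vert \dot d\Vert_{L^2(t_j,t_{j+1})}\le C\,V(t_j)$, and Cauchy--Schwarz together with $d(t_j)=0$ yields
\[
|d(t)|=\Big|\int_{t_j}^{t}\dot d(s)\,ds\Big|\le \sqrt{t-t_j}\,\Vert \dot d\Vert_{L^2(t_j,t)}\le C\sqrt{t-t_j}\,V(t_j),
\]
so $\eta(s)=C\sqrt{s}$ is admissible. Evaluating at the triggering time and using $|d(t_{j+1})|\ge \beta\Vert k(1,\cdot)\Vert\,V(t_j)$ with $V(t_j)>0$ gives $C\sqrt{t_{j+1}-t_j}\ge \beta\Vert k(1,\cdot)\Vert$, hence $t_{j+1}-t_j\ge \big(\beta\Vert k(1,\cdot)\Vert/C\big)^2$. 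Taking $\tau=\min\{1,(\beta\Vert k(1,\cdot)\Vert/C)^2\}$, which depends only on $\beta$, the kernel, and the system parameters, completes the proof.
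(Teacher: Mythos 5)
Your overall skeleton (bound the growth of the deviation from $d(t_j)=0$ by a modulus $\eta(t-t_j)\,V(t_j)$ that is uniform in $j$, then read off a dwell time from the triggering threshold, with the case $V(t_j)=0$ handled separately) coincides with the paper's strategy, and your treatment of the interior terms and of the threshold inequality at $t_{j+1}$ is fine. The gap is exactly the step you flag as the heart of the argument: the claimed hidden trace estimate $\Vert \partial_x v(\cdot,1)\Vert_{L^2(t_j,t_{j+1})}\leq C\,V(t_j)$ for data that are merely in $L^2(0,1)$. This estimate is \emph{false}, already for the pure heat part ($a=\rho=0$, $q_d=0$). Take $v(t_j,x)=\sum_{n\geq 2}(-1)^n b_n\sqrt{2}\sin(n\pi x)$ with $b_n=n^{-1/2}(\log n)^{-1}$, which is in $L^2$; then $\partial_x v(t,1)=\sqrt{2}\pi\sum_n n\,b_n e^{-n^2\pi^2 (t-t_j)}$ and
\[
\int_{t_j}^{\infty}\vert \partial_x v(t,1)\vert^2\,dt=\sum_{n,m\geq 2}\frac{2nm}{n^2+m^2}\,b_n b_m\;\geq\;\frac{2}{5}\sum_{n\geq 2}\;\sum_{m=n}^{2n} b_n b_m\;\geq\; c\sum_{n\geq 2}\frac{1}{(\log 2n)^2}=\infty,
\]
and since the contribution of $\{t\geq t_j+\varepsilon\}$ is finite, the divergence occurs on every interval $(t_j,t_j+\varepsilon)$: it is an initial-layer effect which the lower-order coupling terms cannot repair. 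This is consistent with maximal regularity: the $L^2$-in-time normal trace requires extra smoothness of the datum (e.g. $v(t_j)\in H^1_0$), and your multiplier computation does not close because $\int_0^1 x\,\partial_t \tilde v\,\partial_x \tilde v\,dx$ is not a time derivative and costs $\Vert\partial_t \tilde v\Vert\Vert\partial_x \tilde v\Vert\sim (t-t_j)^{-3/2}$, which is not integrable. A density argument cannot rescue an estimate whose constant is claimed to depend only on the $L^2$ norm, and you cannot invoke parabolic smoothing from the previous interval to place $v(t_j)$ in $H^1$, since a uniform bound of that type would require a lower bound on $t_j-t_{j-1}$ — precisely what is being proved.

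The paper's proof is designed to avoid this obstruction. Instead of differentiating $d$ itself, it introduces the truncation $g=\sum_{n=1}^N k_n\phi_n$ of $k(1,\cdot)$, $\phi_n(x)=\sqrt2\sin(n\pi x)$, and the auxiliary deviation $\tilde d(t)=\int_0^1 g(y)\left(v(t_j,y)-v(t,y)\right)dy$. Because $g(0)=g(1)=0$, the boundary-flux terms $g(1)\partial_x v(t,1)$ and $g(0)\partial_x v(t,0)$ vanish identically in $\dot{\tilde d}$; after two integrations by parts only $g'(1)v(t,1)$, $\int_0^1(-g''+ag)v$ and $\int_0^1\rho g w$ remain, all controlled by $V(t_j)$ and $\sup_s V(s)$ via \Cref{Estimate_of_supNorm}, so no trace of $\partial_x v$ ever appears. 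The price is the approximation error $\vert d(t)-\tilde d(t)\vert\leq \Vert k(1,\cdot)-g\Vert\left(\Vert v(t_j)\Vert+\Vert v(t)\Vert\right)$, which is absorbed into the two-sided triggering threshold \eqref{tr rule} by choosing $N$ large enough that $\Vert k(1,\cdot)-g\Vert<\beta\Vert k(1,\cdot)\Vert$; this leaves the strictly positive constant $a_0$ and yields the dwell time (with a modulus linear in $t_{j+1}-t_j$ rather than your $\sqrt{t_{j+1}-t_j}$, an inessential difference). Your argument becomes correct if you replace the direct differentiation of $d$ by this truncation device, or by any smoothing of $k(1,\cdot)$ vanishing at $x=0,1$; the rest of your scheme then goes through.
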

\begin{proof} 
	Let us consider the following function $g \in C^2([0,1])$  :
	\begin{equation}\label{definition_g}
		g(x):= \sum_{n=1}^{N}k_n \phi_{n}(x)
	\end{equation}  
	where $N \in \N$, $k_n := \int_{0}^{1}k(1, y)\phi_n(y)dy$,  with $k$ satisfying \eqref{kernel} and  $\phi_n(x) =\sqrt{2}\sin(n \pi x) ,n=1,2...$ 
	Let us recall $(v,w)$ is the solution of the event-triggered control system \eqref{FHNlin ET}.  Next, we define
	\begin{equation}\label{tilde_deviation}
		\tilde{d}(t)= \int_{0}^{1} g(y)\left(v(t_{j},y) - v(t,y)\right)dy
	\end{equation}
	for $t \in [t_{j},t_{j+1})$, for $j\geq 0$ and $g$ is given  by \eqref{definition_g}. Taking the time derivative of  $\tilde{d}(t)$ along the solutions of \eqref{FHNlin ET} yields, for all $t \in [t_{j},t_{j+1})$:
	\begin{equation*}
		\begin{split}
			\dot{\tilde{d}}(t) =&  \left( g'(1)v(t,1)- g'(0)v(t,0)  \right)
			 +  \left(g(0)\partial_x v(t,0)  -g(1)\partial_x v(t,1))\right) \\
			&+ \int_{0}^{1}\left(-g''(y)+ag(y)\right)v(t,y)dy+ \int_{0}^{1}\rho g(y)w(t,y)dy
		\end{split}
	\end{equation*}
	Note that  $g(1)\partial_x v(t,1) =g(0)\partial_x v(t,0)= 0,$  since the function  $g$ evaluated at $x=0,1$ vanishes, and $v(t,0)=0$. 
	In addition, by  the expression of $g$, we have   
	\begin{equation*}
		\begin{split}
			\dot{\tilde{d}}(t) =&  \int_{0}^{1}k(1,y)v(t_{j},y)dy \sum_{n=1}^{N}k_n \phi_n'(1) \\
			& + \sum_{n=1}^{N}k_n (n^2\pi^2+a) \int_{0}^{1}\phi_n(y)v(t,y)dy+\rho \sum_{n=1}^{N}k_n  \int_{0}^{1}\phi_n(y)w(t,y)dy
		\end{split}
	\end{equation*}
	Using the Cauchy-Schwarz inequality and $\Vert \phi_n \Vert=1$ for $n=1,2,...$ the following estimate holds for $t \in (t_{j},t_{j+1})$, $j\geq 0$:
	\begin{equation}\label{tilded2}
		\vert \dot{\tilde{d}}(t) \vert   \leq   \Vert k(1,\cdot) \Vert \Vert v(t_j) \Vert F_N + \left(\Vert v(t) \Vert+\Vert w(t) \Vert\right) G_{N}
	\end{equation}
	where   $F_N := \sum_{n=1}^{N}\Big\vert k_n \phi_n'(1)  \Big \vert $ and  $G_N := \sum_{n=1}^{N} \vert k_n (n^2\pi^2+a+\rho) \vert $.
	Therefore, from \eqref{tilded2} along with the fact  $\tilde{d}(t_{j})=0$, we obtain the following estimate:
	\begin{equation}\label{tilded3}
		\vert \tilde{d}(t) \vert \leq (t -t_{j}) \Vert k(1,\cdot) \Vert  V(t_j)  F_N + (t-t_{j})\sup_{t_{j} \leq s \leq t}( V(s) ) G_N
	\end{equation}
	Note that using \eqref{d} and \eqref{tilde_deviation}, the deviation $d(t)$ can be expressed as follows:
	\begin{equation}\label{d(t)tilde}
		d(t) = \tilde{d}(t) + \int_{0}^{1}(k(1,y)-g(y))(v(t_{j},y) - v(t,y))dy. 
	\end{equation} 
	Hence,  combining \eqref{tilded3} and \eqref{d(t)tilde}, we obtain an estimate of $d$ as follows:  
	\begin{equation}\label{normk-g}
		\begin{split}
			\vert d(t) \vert & \leq (t -t_{j}) \Vert k(1,\cdot) \Vert  V(t_j)  F_N + (t-t_{j})\sup_{t_{j} \leq s \leq t}( V(s) ) G_N  \\
			& \hskip 2 cm+ \Vert k(1,\cdot)-g \Vert\Vert v(t_j) \Vert + \Vert k(1,\cdot)-g \Vert\Vert v(t) \Vert.
		\end{split}
	\end{equation}
	 Let us first assume that $V(t_j) \neq 0$. Using \eqref{normk-g} and assuming that an event is triggered  at $t=t_{j+1}$, we have
	\begin{equation}\label{at_tk+1}
		\begin{split}
			\vert d(t_{j+1}) \vert & \leq (t_{j+1} -t_{j}) \Vert k(1,\cdot) \Vert  V(t_j)  F_N \\
			& \hskip 2 cm + (t_{j+1}-t_{j})\sup_{t_{j} \leq s \leq t_{j+1}}(\Vert V(s) \Vert) G_N  \\
			& \hskip 2 cm+ \Vert k(1,\cdot)-g \Vert V(t_j)  + \Vert k(1,\cdot)-g \Vert V(t_{j+1}) 
		\end{split}
	\end{equation}
	and, by \Cref{defn-ev},  we have that, at $t=t_{j+1}$
	\begin{equation}\label{d_at_tk+1}
		\vert d(t_{j+1}) \vert \geq  \beta \Vert k(1,\cdot) \Vert  V(t_j)  + \beta \Vert k(1, \cdot) \Vert  V(t_{j+1}).  	
	\end{equation}
Combining \eqref{at_tk+1} and \eqref{d_at_tk+1}, we get
	\begin{equation*}
		\begin{split}
			&\beta\Vert k(1, \cdot) \Vert \Vert V(t_j) \Vert  + \beta \Vert k(1, \cdot) \Vert \Vert V(t_{j+1}) \Vert   \\
			&  \leq (t_{j+1} -t_{j}) \Vert k(1, \cdot) \Vert F_N  V(t_j)   + (t_{j+1}-t_{j})G_N  \sup_{t_{j} \leq s \leq t_{j+1}}( V(s) )  \\
			&  \hskip 2cm + \Vert k(1,\cdot)-g \Vert V(t_j)  + \Vert k(1)-g \Vert V(t_{j+1}).
		\end{split}
	\end{equation*}
	and hence,
	\begin{equation*}
		\begin{split}
			& \left(\beta\Vert k(1,\cdot) \Vert  -  \Vert k(1,\cdot)-g \Vert  \right)  V(t_{j+1})  +  \left( \beta \Vert k(1,\cdot) \Vert  -  \Vert k(1,\cdot)-g \Vert  \right)  V(t_j)    \\
			&  \leq (t_{j+1} -t_{j}) \Vert k(1,\cdot) \Vert F_N  V(t_j)    +   (t_{j+1}-t_{j}) \, G_N \sup_{t_{j} \leq s \leq t_{j+1}}( V(s) ).
		\end{split}
	\end{equation*}
	By the definition of $g$, it is obvious that $\lim\limits_{N\to \infty} g(y)=k(1,y)$ as $g$ is simply the $N$-mode truncation of the Fourier series expansion of $k(1,\cdot)$. We select $N \geq 1$ in \eqref{definition_g} sufficiently large so  that $\Vert k(1,\cdot) -g \Vert < \beta\Vert k(1,\cdot) \Vert$. 
	Notice that we can always find a $N$ sufficiently large so that the condition $\Vert k(1,\cdot) -g \Vert < \beta\Vert k(1,\cdot) \Vert$ holds, since $\Vert k(1,\cdot)-g\Vert$ tends to zero as $N$ tends to infinity.
	In addition, using the fact that $\Vert V(t_{j+1}) \Vert \geq 0$ and  by \eqref{upper_bound_supNormofU} in  \Cref{Estimate_of_supNorm}, we obtain the following estimate:
	\begin{equation*}
		\begin{split}
			& \left( \beta\Vert k(1,\cdot) \Vert  -  \Vert k(1,\cdot)-g \Vert  \right)  V(t_j)    \\
			&  \leq (t_{j+1} -t_{j}) \Vert k(1,\cdot) \Vert F_N  V(t_j)  + (t_{j+1}-t_{j})G_N M_j  V(t_j)   
		\end{split}
	\end{equation*} 
	where $M_j = \sqrt{2}e^{c/2(t_{j+1}- t_{j})}\left(1+   \Vert k(1,\cdot) \Vert + \frac{\Vert k(1,\cdot) \Vert}{\sqrt{c}}\right) +   \Vert k(1, \cdot) \Vert$. 
\begin{itemize}
		\item $a_0:= \beta\Vert k(1,\cdot) \Vert  -  \Vert k(1,\cdot)-g \Vert $
		\item $a_1:=  \Vert k(1,\cdot) \Vert F_N + G_N   \Vert k(1,\cdot) \Vert$
		\item $a_2:=\sqrt{2} G_N \left(1 +  \Vert k(1,\cdot) \Vert + \tfrac{\Vert k(1,\cdot) \Vert}{\sqrt{c}}\right)$
	\end{itemize}
	we obtain an inequality of the form:
	\begin{equation}\label{dwell_time}
	0<	a_0 \leq a_1 (t_{j+1} - t_{j}) + a_2 (t_{j+1} - t_{j}) e^{\frac{c}{2} (t_{j+1} - t_{j})}
	\end{equation}
	from which we aim at finding a lower bound for $(t_{j+1} - t_{j})$. %Note that the right hand side of \eqref{inequality_for_dwell_time} is a $\mathcal{K}_{\infty}$ function of $(t_{j+1} - t_{j})$.
	 Let us denote it as   $\alpha(s):= a_1 s + a_2 s e^{\frac{c}{2} s}  $ with $s= (t_{j+1} - t_{j})$. Since $a_0$ is strictly positive, then there exists $\tau >0$ such that $s= (t_{j+1} - t_{j})\geq \tau > 0$.
  Indeed, let us write $\tau=\inf \mc A, \text{ where } \mc A=\{s\in (0, \infty)| \alpha(s)\geq a_0>0\}.$ Clearly $\tau\geq 0.$ If $\tau=0,$ there exists a sequence $\{s_n\}\in \mc A$ of nonnegative real numbers such that $s_n \to 0.$ As $\alpha$ is continuous function $\alpha(s_n)\to 0,$ which is a contradiction to the fact $\alpha(s_n)\geq a_0>0,$ as $s_n \in \mc A.$
  
  %Because, if not, there always exists  Continuity of the map $\alpha$ shows that $\alpha(s_n)\to 0,$ which is a contradiction to the fact that $\alpha$ has a lower bound $a_0>0.$  \\
	If $V(t_j) = 0$, then \Cref{Estimate_of_supNorm} guarantees that $V(t)$ remains zero. In this case, by  \Cref{defn-ev},  we do not need to employ triggering anymore, and thus, the Zeno phenomenon is immediately excluded. This concludes the proof.
\end{proof} 
Thus, we already proved that there is a minimal dwell time (which is uniform and does not depend on either the initial condition or on the state of the system), and no Zeno solution can appear. Moreover, one can recover the explicit minimal dwell time numerically using the Lambert W function, see \cite[Section 3.1.1]{MK21} for more details.
Henceforth, we have the following result on the existence of solutions of the system \eqref{FHNlin ET} with \eqref{tr rule}-\eqref{triggering}-\eqref{controlfunction} for all $ t>0$ which is essentially the combination of \Cref{cor wellp} and \Cref{minimal_time}.
\begin{cor}\label{wellposed-main}
	Let $(v_0, w_0)\in L^2(0,1)\times L^2(0,1).$ Then there exists a unique mapping $(v,w)$ such that $(v,w)\in \mc C^0\left([0, \infty); L^2(0,1)\times L^2(0,1)\right)$ with $v\in  L^{2}((t_j, t_{j+1}); H^1(0,1)), $ for all $j\geq $ and $(v,w)$ satisfies \eqref{FHNlin ET} for $t\in (t_j, t_{j+1})$, $\forall j\geq 0.$ 
\end{cor}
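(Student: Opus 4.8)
The plan is to combine the step-by-step local existence furnished by \Cref{cor wellp} with the uniform lower bound on the inter-event times proved in \Cref{minimal_time}; the single new ingredient is the observation that the dwell-time estimate forces the maximal existence horizon $T$ to be infinite, after which the corollary follows immediately.

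First I would recall the definition of $T$ from the opening of \Cref{wellp}: if $\{t_j\}$ is a finite sequence, then $T = \infty$ by convention, and in this case \Cref{cor wellp} directly delivers a unique solution $(v,w)$ on $[0,\infty) = [0,T)$ with the stated regularity, so nothing further is needed. The substantive case is therefore an infinite triggering sequence $\{t_j\}_{j\in\N}$.

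In that case I would invoke \Cref{minimal_time}, which provides a constant $\tau>0$, independent of the initial data, with $t_{j+1}-t_j \geq \tau$ for every $j\geq 0$. Summing these increments and using $t_0=0$ gives $t_j \geq j\tau$, so $t_j \to \infty$ and hence $T = \limsup_{j\to\infty} t_j = \infty$. With $T=\infty$ in hand, \Cref{cor wellp} applies on the whole half-line and yields the unique mapping $(v,w)\in\mc C^0([0,\infty); L^2(0,1)\times L^2(0,1))$ satisfying \eqref{FHNlin ET} on each interval $(t_j, t_{j+1})$.

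The regularity claim $v\in L^2((t_j, t_{j+1}); H^1(0,1))$ for each $j$ is inherited interval-by-interval from \Cref{wellposed 1}, since every $(t_j, t_{j+1})$ is a bounded interval (of length at least $\tau$) on which the local theory holds; uniqueness likewise propagates from the local uniqueness on each interval together with the matching of initial data $v^j(t_j,\cdot)=v^{j-1}(t_j,\cdot)$ used in the construction behind \Cref{cor wellp}. I do not expect any genuine obstacle: the whole point is that a uniform positive dwell time excludes a finite accumulation point of the triggering times, so the local solution never needs to be continued past a finite horizon, and the gluing of the pieces is routine.
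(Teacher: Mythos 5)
Your proposal is correct and follows exactly the paper's own route: the paper proves this corollary simply by remarking that it is ``essentially the combination of \Cref{cor wellp} and \Cref{minimal_time},'' which is precisely your argument that the uniform dwell time $\tau>0$ forces $t_j\geq j\tau\to\infty$, hence $T=\infty$, so the step-by-step construction of \Cref{cor wellp} covers all of $[0,\infty)$. Your write-up in fact makes explicit the details (the finite-sequence case and the summation of dwell times) that the paper leaves implicit.
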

\subsection{Stability analysis}
This section is devoted to the main result regarding the exponential stabilization of the linearized FHN system \eqref{FHNlin ET}. We start by showing that the target system \eqref{FHNtargetvariable} is Input-to-state stable with respect to $d(t),$ defined in \eqref{d}.  
\begin{prop}[Input-to-state stability]\label{lmm iss}
	Let $\epsilon\in (0,\delta)$ be given. Then the target system \eqref{FHNtargetvariable} %is ISS with respect to $d(t)$; more precisely,
 satisfies the following estimate:
	\begin{equation*}
 \norm{u(t)}+\norm{z(t)}\leq C e^{-\delta t}\left(\norm{u(0)}+\norm{z(0)}\right)+\vartheta \sup_{0\leq s\leq t}\left(e^{-(\delta-\epsilon) (t-s)} |d(s)|\right),
		\end{equation*}
	where $C>0$ is uniform with respect to $\epsilon$ and \begin{equation}\label{vartheta}\vartheta =\frac{2\pi^2}{{(\lambda+\pi^2-\delta)^2}-4\rho \gamma}\left(1+ \bigg[\frac{1}{\epsilon}+\frac{1}{\pi^2+\lambda-\delta+\epsilon} \bigg]\right).\end{equation}
	\end{prop}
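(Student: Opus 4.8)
The plan is to diagonalize the diffusion operator by expanding in the Dirichlet eigenbasis $\phi_n(x)=\sqrt2\sin(n\pi x)$ and to reduce the target system \eqref{FHNtargetvariable} to a countable family of forced scalar $2\times2$ ODE systems, with the boundary datum $d$ entering as an inhomogeneity. Writing $u_n(t)=\langle u(t,\cdot),\phi_n\rangle$ and $z_n(t)=\langle z(t,\cdot),\phi_n\rangle$, two integrations by parts together with $u(t,0)=0$, $u(t,1)=d(t)$, $\phi_n(0)=\phi_n(1)=0$ and $\phi_n'(1)=\sqrt2\,n\pi(-1)^n$ give $\langle\pa_x^2u,\phi_n\rangle=-n^2\pi^2u_n-\sqrt2\,n\pi(-1)^n d(t)$, so that the modes obey
\begin{equation*}
\dot u_n+(n^2\pi^2+\lambda)u_n+\rho z_n=-\sqrt2\,n\pi(-1)^n d(t),\qquad \dot z_n=\gamma u_n-\delta z_n .
\end{equation*}
By Parseval it then suffices to estimate $\big(\sum_n u_n^2\big)^{1/2}$ and $\big(\sum_n z_n^2\big)^{1/2}$ and recombine into the $L^2$ norms appearing in the statement. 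Note that a direct energy estimate on \eqref{FHNtargetvariable} is \emph{not} available here, since multiplying the $u$-equation by $u$ produces the uncontrolled trace $d(t)\,\pa_x u(t,1)$; the modal reduction is precisely what circumvents this boundary obstruction.

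Next I would analyze the coefficient matrix $A_n=\begin{pmatrix}-(n^2\pi^2+\lambda)&-\rho\\\gamma&-\delta\end{pmatrix}$. Its characteristic polynomial has discriminant $(n^2\pi^2+\lambda-\delta)^2-4\rho\gamma$, which for $\lambda$ large is positive, giving two real eigenvalues $\mu_\pm^{(n)}=\tfrac12\big(-(n^2\pi^2+\lambda+\delta)\pm\Delta_n\big)$ with $\Delta_n=\sqrt{(n^2\pi^2+\lambda-\delta)^2-4\rho\gamma}$. Since $\Delta_n\le n^2\pi^2+\lambda-\delta$, one gets $\mu_+^{(n)}\le-\delta$ (strictly, because $\rho\gamma>0$), which yields the homogeneous decay $\norm{e^{A_n t}}\le Ce^{-\delta t}$ and, after summation, the term $Ce^{-\delta t}(\norm{u(0)}+\norm{z(0)})$. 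For the forced response I would use Duhamel's formula mode by mode together with the elementary convolution bound: if $\mu<-\delta+\epsilon$ then
\begin{equation*}
\int_0^t e^{\mu(t-s)}|d(s)|\,ds\le \frac{1}{\epsilon-\delta-\mu}\,\sup_{0\le s\le t}\big(e^{-(\delta-\epsilon)(t-s)}|d(s)|\big).
\end{equation*}
Because $\mu_+^{(n)}<-\delta$ uniformly in $n$, the slow-mode gain $1/(\epsilon-\delta-\mu_+^{(n)})$ is bounded by $1/\epsilon$, while the fast-mode gain is controlled by a first-mode quantity comparable to $1/(\pi^2+\lambda-\delta+\epsilon)$; these two, together with the residues $(\mu_\pm^{(n)}+\delta)/\Delta_n$ from the partial-fraction decomposition of $(sI-A_n)^{-1}b_n$ and the coupling factor $\gamma/\Delta_n$ entering the $z$-component, account for the three terms $1+\tfrac1\epsilon+\tfrac{1}{\pi^2+\lambda-\delta+\epsilon}$ appearing in \eqref{vartheta}.

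Finally I would sum the modal forced bounds. The forcing coefficients grow like $\sqrt2\,n\pi$, but on the slow branch the residue $(\mu_+^{(n)}+\delta)/\Delta_n$ is $O(n^{-4})$ and on the fast branch the gain is $O(n^{-2})$, so the series $\sum_n$ converges and is dominated by the $n=1$ contribution; the worst-case forcing-over-gap ratio $(\sqrt2\pi)^2/\Delta_1^2=2\pi^2/\big((\lambda+\pi^2-\delta)^2-4\rho\gamma\big)$ is exactly the prefactor in \eqref{vartheta}. The main obstacle is precisely this summation: one must show the modal gains decay fast enough to keep the series finite, while tracking the constants tightly enough to bound the whole tail by first-mode quantities, and one must check that the constant $C$ in front of the homogeneous part remains uniform in $\epsilon$ (only the disturbance gain $\vartheta$ is permitted to blow up as $\epsilon\to0$, through the $1/\epsilon$ factor). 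A secondary technical point is to justify the term-by-term differentiation and the Parseval recombination under only $d\in L^\infty_{\mathrm{loc}}$ regularity, which follows from the smoothing of the parabolic semigroup on each interval $[t_j,t_{j+1}]$ established in \Cref{wellposed 1}.
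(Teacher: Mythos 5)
Your proposal is correct and takes essentially the same route as the paper's own proof: expansion in the Dirichlet sine basis, reduction to forced $2\times 2$ modal ODE systems, spectral analysis of the coefficient matrices (your direct eigenvalue computation for $A_n$ is equivalent to the paper's commuting splitting $A_n=-\delta I+B_n$), Duhamel's formula with an $\epsilon$-weighted sup convolution bound producing exactly the gains $1/\epsilon$ and $1/(\pi^2+\lambda-\delta+\epsilon)$, and Parseval recombination. Even the loose points coincide: like the paper, you treat the final summation over modes (bounding the tail by first-mode quantities to obtain the stated $\vartheta$) only at the level of an asymptotic sketch rather than a tight computation.
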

\begin{proof}
The continuity of the transformation \eqref{volterra} and the above well-posedness result \Cref{wellposed-main} for the main control system \eqref{FHNlin ET} imply that the system \eqref{FHNtargetvariable} has a unique solution $(u,z)$ such that $(u,z)\in \mc C^0\left([0, \infty); L^2(0,1)\times L^2(0,1)\right)$ with $u(t)\in  H^1(0,1), $ for a.e. $t\in \left(t_j,t_{j+1}\right)$. 

\noindent
Let us denote \begin{align*}
u_n(t)=\sqrt{2}\int_{0}^{1}u(t,x)\sin(n\pi x) dx, \quad  z_n(t)=\sqrt{2}\int_{0}^{1}z(t,x)\sin(n\pi x) dx.
\end{align*}
	As $\{\sqrt{2}\sin(n\pi x)\}_{n\in\mathbb N}$ forms an orthonormal basis of $L^2(0,1)$, using Parseval's identity we have that
	\begin{equation*}
	\norm{u(t)}^2=\sum_{n=1}^{\infty}|u_n(t)|^2, \, \norm{z(t)}^2=\sum_{n=1}^{\infty}|z_n(t)|^2.
	\end{equation*}
	Then $(u_n, z_n)$ satisfy the following equation:
	\begin{equation*}
		\begin{cases}
	\dot{u}_n(t)+(n^2\pi^2+\lambda)u_n(t)+\rho z_n(t)=(-1)^n n\pi d(t),\\
	\dot{z}_n(t)+\delta z_n(t)=\gamma  u_n(t).
	\end{cases}
	\end{equation*}
Let us write the above system in the following abstract form
\begin{equation}\label{ST1}
	\dot{\mathbf{U}}_n(t)=A_n\mathbf{U}_n(t)+F_n(t), \mathbf{U}_n(t)=(u_n(t), z_n(t))^T, \quad  \mathbf{U}_n(0)=(u_{n,0}, z_{0,n})^T. 
\end{equation}
where $$
A_n=
\begin{bmatrix}
-(n^2\pi^2+\lambda)	 & -\rho  \\
	\gamma & -\delta
\end{bmatrix}, \quad F_n(t)= ((-1)^n n\pi d(t),0)^T$$
 We decompose $A_n$ as
$$A_n=\begin{bmatrix}
	-\delta&0\\0&-\delta
\end{bmatrix} 
+\begin{bmatrix}
	-(n^2\pi^2+\lambda-\delta)&-\rho\\\gamma&0
\end{bmatrix}=A_1+B_n.$$
 Since the matrix $A_1$ and $B_n$ commute, we have
$e^{A_n t}=e^{A_1t}e^{B_nt}$.
We choose the damping parameter $\lambda$ large enough. In particular, we can set $\lambda>\delta$ and also $\sqrt{{(\lambda+n^2\pi^2-\delta)^2}-4\rho \gamma}>0$. 
For each $n\in \N$, eigenvalues of the matrix $B_n$ are given by
\begin{align*}
&\lambda_n=\frac{1}{2}\bigg[-({\lambda+n^2\pi^2-\delta})+\left(\sqrt{{(\lambda+n^2\pi^2-\delta)^2}-4\rho \gamma}\right)\bigg],\\ 
&\mu_n=\frac{1}{2}\bigg[-({\lambda+n^2\pi^2-\delta})-\left(\sqrt{{(\lambda+n^2\pi^2-\delta)^2}-4\rho \gamma}\right)\bigg].
\end{align*}
It can be easily checked that the eigenvalues of the matrix $B_n$ are negative for all $n \in \N.$ Furthermore, for large values of $n$, we have the following behavior of the eigenvalues of $B_n$
\begin{equation*}\lambda_n\approx -\frac{\rho \gamma}{n^2\pi^2}+O\left(\frac{1}{n^4}\right), \quad \mu_n\approx -(n^2\pi^2+\lambda-\delta)+O\left(\frac{1}{n^2}\right).
\end{equation*}
Also we have \begin{align*}
	(\lambda_n-\mu_n)=\sqrt{{(\lambda+n^2\pi^2-\delta)^2}-4\rho \gamma}>0 \,\, (\text{ by the choice of } \lambda)
\end{align*}
and the following asymptotic expression  \begin{equation*}
(\lambda_n-\mu_n)\approx n^2\pi^2+\lambda-\delta-\frac{2\rho \gamma}{n^2\pi^2}+O\left(\frac{1}{n^4}\right).
\end{equation*}
A direct computation shows that,$$e^{B_nt}=\frac{1}{\lambda_n-\mu_n}\begin{bmatrix}
	\lambda_ne^{\lambda_nt}-\mu_ne^{\mu_n t}& \frac{\lambda_n\mu_n}{\gamma}\left(e^{\mu_nt}-e^{\lambda_nt}\right)\\ \\
	\gamma(e^{\lambda_nt}-e^{\mu_nt})	&\lambda_ne^{\mu_nt}-\mu_ne^{\lambda_nt}
\end{bmatrix}$$ for all $n \in \N$. Therefore,
$$e^{A_n t}=\frac{1}{\lambda_n-\mu_n}e^{-\delta t}\begin{bmatrix}
\lambda_ne^{\lambda_nt}-\mu_ne^{\mu_n t}& \frac{\lambda_n\mu_n}{\gamma}\left(e^{\mu_nt}-e^{\lambda_nt}\right)\\ \\
 \gamma(e^{\lambda_nt}-e^{\mu_nt})	&\lambda_ne^{\mu_nt}-\mu_ne^{\lambda_nt}
\end{bmatrix}.$$\\
The corresponding solution $\mathbf{U}_n(t)$ of \eqref{ST1} is given by
$$\mathbf{U}_n(t)=e^{A_nt}\mathbf{U}_0+\int_{0}^{t}e^{A_n(t-s)}F_n(s) ds.$$
Thus we have the following expressions for the solution of \eqref{ST1}:
\begin{align*}
\nonumber&u_n(t)=\frac{e^{-\delta t}}{\lambda_n-\mu_n}\bigg[u_{0,n}(	\lambda_ne^{\lambda_nt}-\mu_ne^{\mu_n t})+\frac{\lambda_n\mu_n}{\gamma}z_{0,n}(e^{\mu_n t}-e^{\lambda_n t})\bigg]\\
& \hspace{5cm}+\frac{(-1)^n n\pi}{\mu_n-\lambda_n}\int_{0}^{t}e^{-\delta (t-s)}\left(\lambda_ne^{\lambda_n (t-s)}- \mu_ne^{\mu_n (t-s)}\right)d(s) ds\\
\nonumber&z_n(t)=\frac{e^{-\delta t}}{\lambda_n-\mu_n}\bigg[u_{0,n}\gamma(e^{\lambda_n t}- e^{\mu_n t})+z_{0,n}(\lambda_n e^{\mu_n t}-\mu_ne^{\lambda_n t})\bigg]\\
&\hspace{5cm}+\frac{(-1)^n n\pi\gamma}{\lambda_n-\mu_n}\int_{0}^{t}e^{-\delta (t-s)}\left( e^{\lambda_n (t-s)}-e^{\mu_n (t-s)}\right)d(s) ds.
\end{align*}
In order to find the stability estimate for the system \eqref{FHNtargetvariable}, it is enough to establish the same for $(u_n, z_n)$, the solution of \eqref{ST1}.
%\paragraph{{Estimate of $u_n(t)$}}
%\begin{itemize}

\noindent
\paragraph{\underline{\it{Estimate of $u_n(t)$}}}
\begin{align}\label{un}
\nonumber&	|{u_n(t)}|^2\leq \frac{e^{-2\delta t}}{|\mu_n-\lambda_n|^2}\bigg[|u_{0,n}|^2(\lambda_ne^{\lambda_n t}-\mu_n e^{\mu_n t})^2+|z_{0,n}|^2\rho^2\gamma(e^{\lambda_n t}- e^{\mu_n t})^2\bigg]\\
\nonumber	& \hspace{5cm}+\frac{ n^2\pi^2}{|\mu_n-\lambda_n|^2}{\left(\int_{0}^{t}e^{-\delta (t-s)}\left(\lambda_n e^{\lambda_n (t-s)}- \mu_n e^{\mu_n (t-s)}\right)d(s) ds\right)^2}\\
	&\leq C {e^{-2\delta t}}\bigg[|u_{0,n}|^2 \frac{(|\lambda_n|^2+|\mu_n|^2)}{|\mu_n-\lambda_n|^2}+|z_{0,n}|^2\frac{2}{|\mu_n-\lambda_n|^2}\bigg]+\frac{ n^2\pi^2}{|\mu_n-\lambda_n|^2} \mc I^2.
\end{align}
Let us estimate the term $\mc I=\int_{0}^{t}e^{-\delta (t-s)}\left(\lambda_ne^{\lambda_n (t-s)}- \mu_ne^{\mu_n (t-s)}\right)d(s) ds$.
\begin{align}\label{i}
	\nonumber	|\mc I|&\leq \sup_{0\leq s\leq t}\left(e^{-\delta (t-s)} |d(s)|\right)\int_{0}^{t}\left|\left(\lambda_ne^{\lambda_n (t-s)}-\mu_ne^{\mu_n (t-s)}\right)\right|ds\\
	\nonumber	&\leq \sup_{0\leq s\leq t}\left(e^{-\delta (t-s)}|d(s)|\right)\left(|\lambda_n|e^{\lambda_n t}\int_{0}^{t}e^{-\lambda_ns}+|\mu_n|e^{\mu_nt}\int_{0}^{t}e^{-\mu_ns}\right)\\
	\nonumber	&\leq \sup_{0\leq s\leq t}\left(e^{-\delta (t-s)}|d(s)|\right)\left(\frac{|\lambda_n|}{\lambda_n}(e^{\lambda_nt}-1)+\frac{|\mu_n|}{\mu_n}(e^{\mu_nt}-1)
	\right)\\
	&\leq 2 \sup_{0\leq s\leq t}\left(e^{-\delta (t-s)}|d(s)|\right), 
\end{align}
here we have used the fact $\left|\left(\frac{|\lambda_n|}{\lambda_n}(e^{\lambda_nt}-1)+\frac{|\mu_n|}{\mu_n}(e^{\mu_nt}-1)
	\right)\right|<2$ as $\lambda_n, \mu_n <0$.
%\begin{align}
%\nonumber	|\mc I|&\leq \sup_{0\leq s\leq t}\left(e^{-(\delta-\epsilon) (t-s)} |d(s)|\right)\int_{0}^{t}\left|\left(\mu_ne^{(\lambda_n-\epsilon) (t-s)}- \lambda_ne^{(\mu_n-\epsilon) (t-s)}\right)\right|ds\\
%\nonumber	&\leq \sup_{0\leq s\leq t}\left(e^{-(\delta-\epsilon) (t-s)}|d(s)|\right)\left(e^{(\lambda_n-\epsilon)t}\int_{0}^{t}e^{-(\lambda_n-\epsilon)s}+e^{(\mu_n-\epsilon)t}\int_{0}^{t}e^{-(\mu_n-\epsilon)s}\right)\\
%\nonumber	&\leq \sup_{0\leq s\leq t}\left(e^{-(\delta-\epsilon) (t-s)}|d(s)|\right)\left(\frac{e^{(\lambda_n-\epsilon)t}-1}{(\lambda_n-\epsilon)}+\frac{e^{(\mu_n-\epsilon)t}-1}{(\mu_n-\epsilon)}
%		\right)\\
%		&\leq M \sup_{0\leq s\leq t}\left(e^{-(\delta-\epsilon) (t-s)}|d(s)|\right)
%\end{align}

\noindent
Therefore combining \eqref{un} and \eqref{i}, we have 
\begin{equation}\label{un_final}
	|u_n(t)|^2\leq C_1e^{-2\delta t}[|u_{0,n}|^2+|z_{0,n}|^2]+  M_1 \left({\sup_{0\leq s\leq t}\left(e^{-\delta (t-s)}|d(s)|\right)}\right)^2.
\end{equation}
Here, we have used that $\frac{(|\lambda_n|^2+|\mu_n|^2)}{|\mu_n-\lambda_n|^2}, \frac{2}{|\mu_n-\lambda_n|^2} \text{and }\frac{ n^2\pi^2}{|\mu_n-\lambda_n|^2} $ are bounded from the expression of $\mu_n, \lambda_n$ and $M_1=\frac{2\pi^2}{{(\lambda+\pi^2-\delta)^2}-4\rho \gamma}.$
\paragraph{\underline{\it{Estimate of $z_n(t)$}}}
%\item
\begin{align}\label{zn}
	\nonumber&	|{z_n(t)}|^2\leq \frac{e^{-2\delta t}}{|\mu_n-\lambda_n|^2}\bigg[|u_{0,n}|^2\gamma^2(e^{\lambda_n t}- e^{\mu_n t})^2+|z_{0,n}|^2(\lambda_ne^{\mu_n t}- \mu_n e^{\lambda_n t})^2\bigg]\\
	\nonumber	& \hspace{5cm}+\frac{ n^2\pi^2}{|\mu_n-\lambda_n|^2}{\left(\int_{0}^{t}e^{-\delta (t-s)}\left(  e^{\lambda_n (t-s)}-e^{\mu_n (t-s)}\right)d(s) ds\right)^2}\\
	&\leq C {e^{-2\delta t}}\bigg[|z_{0,n}|^2 \frac{(|\lambda_n|^2+|\mu_n|^2)}{|\mu_n-\lambda_n|^2}+|u_{0,n}|^2\frac{2}{|\mu_n-\lambda_n|^2}\bigg]+\frac{ n^2\pi^2}{|\mu_n-\lambda_n|^2} \mc J^2.
\end{align}
Let $\epsilon \in (0,\delta)$ be given. We estimate the term $\mc J=\int_{0}^{t}e^{-\delta (t-s)}\left(  e^{\lambda_n (t-s)}-e^{\mu_n (t-s)}\right)d(s) ds$ as follows
\begin{align}\label{j}
	\nonumber	|\mc J|&\leq \sup_{0\leq s\leq t}\left(e^{-(\delta-\epsilon) (t-s)} |d(s)|\right)\int_{0}^{t}\left|\left(e^{(\lambda_n-\epsilon) (t-s)}- e^{(\mu_n-\epsilon) (t-s)}\right)\right|ds\\
	\nonumber	&\leq \sup_{0\leq s\leq t}\left(e^{-(\delta-\epsilon) (t-s)}|d(s)|\right)\left(e^{(\lambda_n-\epsilon)t}\int_{0}^{t}e^{-(\lambda_n-\epsilon)s}+e^{(\mu_n-\epsilon)t}\int_{0}^{t}e^{-(\mu_n-\epsilon)s}\right)\\
	\nonumber	&\leq \sup_{0\leq s\leq t}\left(e^{-(\delta-\epsilon) (t-s)}|d(s)|\right)\left(\frac{e^{(\lambda_n-\epsilon)t}-1}{(\lambda_n-\epsilon)}+\frac{e^{(\mu_n-\epsilon)t}-1}{(\mu_n-\epsilon)}
	\right)\\
	&\leq M_2 \sup_{0\leq s\leq t}\left(e^{-(\delta-\epsilon) (t-s)}|d(s)|\right),
\end{align}
where $M_2=\sup\limits_{t>0}\left(\frac{e^{(\lambda_n-\epsilon)t}-1}{(\lambda_n-\epsilon)}+\frac{e^{(\mu_n-\epsilon)t}-1}{(\mu_n-\epsilon)}
	\right).$
%\end{itemize}
Therefore combining \eqref{zn} and \eqref{j}, we have 
\begin{equation}\label{zn_final}
	|z_n(t)|^2\leq C_1e^{-2\delta t}[|u_{0,n}|^2+|z_{0,n}|^2]+  M_3 \left({\sup_{0\leq s\leq t}\left(e^{-(\delta-\epsilon) (t-s)}|d(s)|\right)}\right)^2,
\end{equation}
where $M_3=M_1 M_2.$ Using the expressions of the eigenvalues and noting the fact that $\lambda_n, \mu_n<0,$ we have $M_2\leq \bigg[\frac{1}{\epsilon}+\frac{1}{\pi^2+\lambda-\delta+\epsilon} \bigg].$
Adding \eqref{un_final} and \eqref{zn_final}, the required result follows. 
\end{proof}
\begin{rem}\label{optimal bd}
    The constant $\vartheta$ in the input-to-state stability estimate \Cref{lmm iss} is not sharp. The reason for getting such a bound is that we need to use an upper bound of the term $M_2$ in the estimate of $z_n,$ see \eqref{zn}. In other cases, such as for scalar parabolic equations, an explicit bound for $\vartheta$ can be derived, depending on the system parameters and the decay rate. For further details, see \cite[Lemma 3]{espitia2021event}.
\end{rem}
Now we are in a position to prove our main exponential stabilization result.
\begin{theorem}\label{main_theorem}
 Let $\epsilon \in (0,\delta)$ be given, $\vartheta$ be as in \Cref{lmm iss} and $\beta$ be chosen in such a way that\begin{equation}\label{beta}
     \Phi_e:= 2 \beta  \vartheta \Vert k(1,\cdot) \Vert \Vert \Pi^{-1} \Vert<1.
 \end{equation} Then, the  closed-loop system  \eqref{FHNlin ET}  with event-triggered boundary control    \eqref{tr rule}-\eqref{controlfunction} has a unique solution and is globally exponentially stable, i.e. there exists $M>0$ depending on $\epsilon$ and the system parameters such that  for every $(v_0,w_0) \in L^2(0,1)\times L^2(0,1)$ the unique solution $(v,w) \in  \mc {C}^{0}([0,\infty); L^{2}(0,1)\times L^2(0,1))$  of \eqref{FHNlin ET}
 %satisfying  $v \in C^{1}(I \times [0,1])$, $v(t) \in C^2([0,1])$ for all $t > 0 $ where $I = [0,\infty) \backslash \{ t_{j} \geq 0, k= 0,1,2,...\} $
satisfies
	\begin{equation}\label{ex stb}
		\Vert v(t) \Vert+\Vert w(t) \Vert \leq M e^{-(\delta-\epsilon) t} \left(\Vert v(0) \Vert+\Vert w(0) \Vert\right), \quad \text{for all} \quad t\geq 0.
	\end{equation}
\end{theorem}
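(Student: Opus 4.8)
The plan is to transfer the input-to-state stability estimate of \Cref{lmm iss} for the target system \eqref{FHNtargetvariable} back to the physical variables $(v,w)$ through the bounded invertible transformation $\Pi$, using the triggering rule \eqref{tr rule} to convert the deviation term into a state-dependent term that can be absorbed thanks to the smallness condition \eqref{beta}. Throughout, global existence and uniqueness of $(v,w)$ on $[0,\infty)$ is already guaranteed by \Cref{wellposed-main}, which rests on the minimal dwell-time of \Cref{minimal_time} excluding Zeno behaviour; so it remains only to establish the decay estimate \eqref{ex stb}.

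First I would set $(u,z)=(\Pi v,\Pi w)$, which solves \eqref{FHNtargetvariable}, and write $\Theta(t):=\norm{u(t)}+\norm{z(t)}$ and $V(t):=\norm{v(t)}+\norm{w(t)}$. Since $\Pi,\Pi^{-1}$ are bounded on $L^2(0,1)$, one has $\Theta(t)\leq \norm{\Pi}V(t)$ and, crucially, $V(t)\leq \norm{\Pi^{-1}}\Theta(t)$. \Cref{lmm iss} then reads
\[
\Theta(t)\leq C e^{-\delta t}\Theta(0)+\vartheta\sup_{0\leq s\leq t}\big(e^{-(\delta-\epsilon)(t-s)}|d(s)|\big).
\]
Next I would invoke the triggering rule: for $t\in[t_j,t_{j+1})$ the time $t$ does not lie in $E(t_j)$, whence
\[
|d(t)|\leq \beta\norm{k(1,\cdot)}\big(V(t)+V(t_j)\big)\leq \beta\norm{k(1,\cdot)}\norm{\Pi^{-1}}\big(\Theta(t)+\Theta(t_{j(s)})\big),
\]
valid for every $t\geq0$, with $t_{j(s)}$ denoting the last triggering time not exceeding the current argument.

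The core is a weighted-supremum closure. Multiplying the ISS estimate by $e^{(\delta-\epsilon)t}$ turns the deviation term into $\vartheta\sup_{0\leq s\leq t}e^{(\delta-\epsilon)s}|d(s)|$; substituting the triggering bound and setting $\Psi(t):=\sup_{0\leq s\leq t}e^{(\delta-\epsilon)s}\Theta(s)$, the instantaneous term $\Theta(s)$ and the memory term $\Theta(t_{j(s)})$ should each be dominated by $\Psi(t)$, producing an inequality of the form
\[
\Psi(t)\leq C\,\Theta(0)+\Phi_e\,\Psi(t),\qquad \Phi_e=2\beta\vartheta\norm{k(1,\cdot)}\norm{\Pi^{-1}}.
\]
Because $\Phi_e<1$ by \eqref{beta}, the term $\Phi_e\Psi(t)$ is absorbed on the left, giving $\Psi(t)\leq \frac{C}{1-\Phi_e}\Theta(0)$ uniformly in $t$, i.e. $\Theta(t)\leq \frac{C}{1-\Phi_e}e^{-(\delta-\epsilon)t}\Theta(0)$. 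Transforming back with $V(t)\leq\norm{\Pi^{-1}}\Theta(t)$ and $\Theta(0)\leq\norm{\Pi}V(0)$ then yields \eqref{ex stb} with $M=\frac{C\,\norm{\Pi}\,\norm{\Pi^{-1}}}{1-\Phi_e}$, which depends on $\epsilon$ only through $\vartheta$ and $\Phi_e$ since $C$ is uniform in $\epsilon$.

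I expect the main obstacle to be exactly this weighted-supremum closure, and specifically the handling of the memory term $V(t_j)$: weighting the past trigger value $\Theta(t_{j(s)})$ at the later time $s$ introduces a factor $e^{(\delta-\epsilon)(s-t_{j(s)})}$ that is not controlled by the dwell-time estimate alone. Showing that this term nonetheless contributes a clean factor $\beta\vartheta\norm{k(1,\cdot)}\norm{\Pi^{-1}}$ to $\Phi_e$ — matching the instantaneous term and giving the stated factor of $2$ — is where the per-interval estimate of \Cref{Estimate_of_supNorm} must be used to relate $\Theta(t_{j(s)})$ to the running weighted supremum, and is the delicate quantitative step on which both the contraction condition \eqref{beta} and the uniformity of $M$ ultimately rest.
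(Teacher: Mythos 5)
Your overall strategy coincides with the paper's own proof: transform to the target system, invoke the ISS estimate of \Cref{lmm iss}, bound the deviation through the triggering rule \eqref{tr rule}, close a weighted-supremum inequality, absorb the state term using $\Phi_e<1$, and return to $(v,w)$ via the boundedness of $\Pi$ and $\Pi^{-1}$; even your final constant $M=C\Vert\Pi\Vert\Vert\Pi^{-1}\Vert(1-\Phi_e)^{-1}$ is the paper's. The difference is that you stop short of carrying out the one step you (rightly) single out as delicate, and as framed in your proposal that step genuinely fails. Writing $\nu=\delta-\epsilon$ and $t_{j(s)}$ for the last trigger before $s$, the memory term satisfies $e^{\nu s}V(t_{j(s)})=e^{\nu t_{j(s)}}V(t_{j(s)})\,e^{\nu(s-t_{j(s)})}$, and the factor $e^{\nu(s-t_{j(s)})}$ is \emph{not} uniformly bounded: \Cref{minimal_time} provides only a \emph{lower} bound on inter-event times, never an upper one (indeed, when $E(t_j)=\emptyset$ the last interval is infinite), so the claim that "the memory term is dominated by $\Psi(t)$" does not hold. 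Your proposed repair via \Cref{Estimate_of_supNorm} cannot work as stated either: that lemma bounds the future by the past, $\sup_{[t_j,t_{j+1}]}V\leq M_j V(t_j)$, whereas here one must dominate the \emph{past} value $V(t_{j(s)})$, amplified by $e^{\nu(s-t_{j(s)})}$, by the running weighted supremum; moreover $M_j$ itself contains the factor $e^{\frac{c}{2}(t_{j+1}-t_j)}$, which again blows up with the interval length.

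You should know, however, that the paper is cavalier at exactly this point: it passes from \eqref{est3} to \eqref{est4}, i.e.\ from $|d(t)|\leq\beta\Vert k(1,\cdot)\Vert\left(V(f(t))+V(t)\right)$ to $|d(t)|e^{\nu t}\leq 2\beta\Vert k(1,\cdot)\Vert\sup_{0\leq s\leq t}\left(V(s)e^{\nu s}\right)$, with no justification, and this implication requires precisely $V(f(t))e^{\nu t}\leq\sup_{0\leq s\leq t}V(s)e^{\nu s}$, which carries the same uncontrolled factor $e^{\nu(t-f(t))}$. So your proposal reproduces the published argument including its weakest link, and is candid where the paper is silent; but neither your sketch nor the paper's text supplies the missing ingredient, so as a proof the proposal has a gap. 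A clean repair must change something structural: for instance, discount the memory term in the triggering rule (replace $\beta\Vert k(1,\cdot)\Vert V(t_j)$ by $\beta\Vert k(1,\cdot)\Vert e^{-\nu(t-t_j)}V(t_j)$ in \eqref{tr rule}), after which your weighted-supremum closure goes through verbatim with the same $\Phi_e$, or use a rule involving only the current state $V(t)$; in either case the dwell-time argument of \Cref{minimal_time}, which exploits the undiscounted $V(t_j)$ term in the threshold, has to be revisited.
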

\begin{proof}
For given $\epsilon\in(0,\delta)$, we set $\vartheta$ as in \Cref{lmm iss} and, to abridge the notation, we denote $\nu=\delta-\epsilon$. It follows from \eqref{triggering}  that the following inequality holds for all $t \in [t_{j},t_{j+1})$:
	\begin{equation}\label{est2}
		\vert d(t) \vert \leq \beta \Vert k(1,\cdot) \Vert  V(t_j)  + \beta \Vert k(1,\cdot) \Vert  V(t), 
	\end{equation}
for every $t\geq 0,$ we define $f(t)=\max\{t_j: j\geq 0, t\geq t_j\}.$ Then \eqref{est2} implies the following 
\begin{equation}\label{est3}
	\vert d(t) \vert \leq \beta \Vert k(1,\cdot) \Vert  V(f(t))  + \beta \Vert k(1,\cdot) \Vert  V(t) .
\end{equation} 
	Therefore, from \eqref{est3}, we deduce the following inequality for all $t\geq 0$:
	
	\begin{equation}\label{est4}
		\left( \vert d(t)\vert  e^{\nu t}  \right) \leq 2 \beta \Vert k(1,\cdot) \Vert \sup_{0 \leq s \leq t} \left(  V(s)  e^{\nu s} \right) .
	\end{equation}
Let us choose any $\hat t\geq 0$. Then for all $t\in [0,\hat{t}]$ we have from \eqref{est4}
\begin{equation}\label{est5}
 \left( \vert d(t)\vert  e^{\nu t}  \right) \leq 2 \beta \Vert k(1,\cdot) \Vert \sup_{0 \leq s \leq \hat t} \left(  V(s)  e^{\nu s} \right). 
\end{equation}
As the above estimate holds for all $t\in [0,\hat t]$, we have
\begin{equation}
\sup_{0 \leq s \leq \hat t}	 \left( \vert d(s)\vert  e^{\nu s}  \right) \leq 2 \beta \Vert k(1,\cdot) \Vert \sup_{0 \leq s \leq \hat t} \left(  V(s)  e^{\nu s} \right). 
\end{equation}
Since $\hat t$ is arbitrary, one can write the following
\begin{equation}\label{est6}
	\sup_{0 \leq s \leq t}	 \left( \vert d(s)\vert  e^{\nu s}  \right) \leq 2 \beta \Vert k(1,\cdot) \Vert \sup_{0 \leq s \leq  t} \left(  V(s)  e^{\nu s} \right). 
\end{equation}
	On the other hand, by \Cref{lmm iss}, we obtain
	\begin{equation}\label{ISS_target}
	\left(	\Vert u(t) \Vert+\Vert z(t) \Vert\right) e^{\nu t} \leq C\left(	\Vert u(0) \Vert+\Vert z(0) \Vert\right) + \vartheta \sup_{0\leq s \leq t}\left(\vert d(s) \vert e^{\nu s}\right).
	\end{equation}
	Again let us pick $\hat t\geq 0$. Then for all $t\in [0, \hat t],$ we have \begin{equation}\label{est7}
	\left(	\Vert u(t) \Vert+\Vert z(t) \Vert\right) e^{\nu t} \leq C\left(	\Vert u(0) \Vert+\Vert z(0) \Vert\right) + \vartheta \sup_{0\leq s \leq \hat t}\left(\vert d(s) \vert e^{\nu s}\right).
\end{equation}
As the above estimate holds for all $t\in [0,\hat t]$, we have
\begin{equation}\label{est8}
\sup_{0\leq s \leq \hat t}	\left(	\Vert u(s) \Vert+\Vert z(s) \Vert\right) e^{\nu s} \leq C\left(	\Vert u(0) \Vert+\Vert z(0) \Vert\right) + \vartheta \sup_{0\leq s \leq \hat t}\left(\vert d(s) \vert e^{\nu s}\right).
\end{equation}
Since $\hat t$ is arbitrary we can write the following
		\begin{equation}\label{est9}
		\sup_{0\leq s \leq t}\left( 	\left(	\Vert u(s) \Vert+\Vert z(s) \Vert\right)  e^{\nu s} \right) \leq C \left(	\Vert u(0) \Vert+\Vert z(0) \Vert\right) + \vartheta \sup_{0\leq s \leq t}\left(\vert d(s) \vert e^{\nu s}\right).
	\end{equation}
	Hence, combining \eqref{est6} with  \eqref{est9}, we obtain 
	\begin{equation*}
		\sup_{0\leq s \leq t}\left( 	\left(	\Vert u(s) \Vert+\Vert z(s) \Vert\right)  e^{\nu s} \right) \leq C \left(	\Vert u(0) \Vert+\Vert z(0) \Vert\right) + 2 \beta  \vartheta \Vert k(1,\cdot) \Vert \sup_{0 \leq s \leq t} \left(  V(s)  e^{\nu s} \right) 
	\end{equation*}
	and using the fact $  V(s) \leq {\Pi}^{-1} \left(\Vert u(t) \Vert+\Vert z(t) \Vert\right) $, we deduce
	\begin{equation*}\label{estimateofsup-d3}
		\sup_{0\leq s \leq t}\left( \left(\Vert u(s) \Vert+\Vert z(s) \Vert  \right)e^{\nu s} \right) \leq C \left(	\Vert u(0) \Vert+\Vert z(0) \Vert\right) + \Phi_e \sup_{0 \leq s \leq t} \left( \left(\Vert u(s) \Vert+\Vert z(s) \Vert\right) e^{\nu s} \right) 
	\end{equation*}
	where
	%\begin{equation}\label{Phi_event2}
		$\Phi_e:= 2 \beta  \vartheta \Vert k(1,\cdot) \Vert \Vert \Pi^{-1} \Vert.$
	%\end{equation}
	%We choose $\beta$ in such a way that, it holds that $ \Phi_e < 1$. 
 Thereby, thanks to \eqref{beta} and the invertibility of the backstepping transformation, the solution  to the closed-loop system \eqref{FHNlin ET}  with event-triggered control  \eqref{triggering}-\eqref{controlfunction} satisfies
	\begin{equation*}
		\sup_{0 \leq s \leq t} \left( \left(\Vert v(s) \Vert+\Vert w(s) \Vert\right) e^{\nu s} \right)  \leq C \norm{\Pi}\norm{\Pi^{-1}}(1-\Phi_e)^{-1}  \left(\Vert v(0) \Vert+\Vert w(0) \Vert\right).
	\end{equation*}
	which leads to the following:
	\begin{equation*}
		 \left(\Vert v(t) \Vert+\Vert w(t) \Vert\right)    \leq  M e^{-\nu t}\left(\Vert v(0) \Vert+\Vert w(0) \Vert\right), \, \forall t\geq 0,
	\end{equation*}
	with $ M:= C \norm{\Pi}\norm{\Pi^{-1}}(1-\Phi_e)^{-1}$. The proof is finished.
	\end{proof}

\section{Numerical experiments}\label{sec:5}

In this section, we present some numerical illustrations and comments about the practical implementation of the backstepping technique and the event-triggering control. In what follows we shall use the notation $\inter{a,b}=[a,b]\cap \mathbb N$ for any real numbers $a<b$. 

\subsection{Some considerations on the system parameters}

In \Cref{setting of the problem}, we have mentioned that the linearized FitzHugh-Nagumo system \eqref{FHNlin} is exponentially stable with a decay rate $\omega=\min\{a, \delta\}$ without the action of any control. Thus the important case to study the backstepping-based stabilization problem is when $a<\delta.$ As, in this case, the free system (\eqref{FHNlin} with $q=0$) is stable with decay $e^{-a t},$ and the action of the event-triggering control law makes the decay rate up to $e^{-\omega t}$, $\omega<\delta$. 

To see the effect, in a numerical simulation, of the action of the event-triggering control law, that actually helps to stabilize an unstable system to a stable one, we choose to change the sign of the parameters $a$ in the system \eqref{FHNlin} and make the stable FHN to an unstable coupled parabolic-ODE system.
To this end, let us consider the model given by
\begin{align}\label{eq:heat_memory_simple}
	\begin{cases}
		\dis{\partial_t v} = {\pa_{x}^2 v} - a v - \rho w, & (t,x)\in (0,\infty)\times (0,1), \\
		\dis{\partial_t w} = \gamma v - \delta w, & (t,x)\in (0,\infty)\times (0,1),
		\\
		v(t,0) = 0, \quad v(t,1)=q(t) &t \in (0,\infty),
		\\
		v(0,x) = v_0(x), \quad w(0,x)=w_0(x) & x\in(0,1),
	\end{cases}
\end{align}  
where $\rho, \gamma>0$ and $\delta>0>a$. For some fixed values of $\rho, \gamma, \delta$, we choose $a <0$ in such a way that it satisfies the following
\begin{equation}\label{positve}
{(a+\pi^2-\delta)^2-4 \rho \gamma}>0 \text{ along with }
 \sqrt{{(a+\pi^2-\delta)^2}-4\rho \gamma} > (a+\pi^2+\delta).
\end{equation}
As $a$ is negative, we can always choose such $a$ with a large enough absolute value. Furthermore, the spectrum of the associated operator for the system \eqref{eq:heat_memory_simple} can be expressed as
\begin{align*}
&\lambda_n=\frac{1}{2}\bigg[-\left(a+n^2\pi^2+\delta\right) + \sqrt{\left(a+n^2\pi^2-\delta\right)^2 - 4\rho \gamma} \bigg],\\ 
&\mu_n=\frac{1}{2}\bigg[-\left(a+n^2\pi^2+\delta\right) - \sqrt{\left(a+n^2\pi^2-\delta\right)^2 - 4\rho \gamma} \bigg].
\end{align*}
These expressions suggest that it is reasonable to find the conditions for $a$ for which the first eigenvalue for $\lambda_n$ is positive. Thanks to the assumption \eqref{positve}, we have $\lambda_1>0$, which essentially ensures the instability of the system \eqref{eq:heat_memory_simple}.

It is worth mentioning that, here, we will not change the sign of the ODE component in the ODE of the system \eqref{FHNlin}. Otherwise, it is impossible to make the system exponentially stable, as indeed, the continuous backstepping-based feedback law also could not change the decay rate of the ODE, see \cite{chowdhury2024local}.

\subsection{Discretization of the coupled model and numerical implementation of the backstepping control}

For the numerical tests, system \eqref{eq:heat_memory_simple} is discretized in time by using a standard implicit Euler scheme and discretized in space by a usual finite-difference scheme. More precisely, let $N,M\in\mathbb N^*$, we set $\delta t=T/M$ and $h=1/(N+1)$ and consider the following uniform discretization for the space and time variables
\begin{align*}
	&0=x_0<x_1<\ldots< x_{N}<x_{N+1}=1, 
	\\
	&0=t_0<t_1<\ldots< T_{M}=T,
\end{align*}
where $x_i=i h$, $i\in\inter{0,N+1}$, and $t_n=n\delta t$, $n\in\inter{0,M}$. The numerical approximation of a function $f=f(x,t)$ at a grid point $(t_n,x_i)$ will be denoted as $f_i^n:=f(t_n,x_i)$ and, for fixed $n$, we write $f^n=\left(\begin{array}{ccc}f_1^n,  \dots, f_N^n\end{array}\right)^\top$ the evaluation at the interior points.

Defining $Z:=(v,w)^\top$ and using the above notation, the fully-discrete version of \eqref{eq:heat_memory_simple} takes the form
\begin{equation}\label{num_scheme}
\begin{cases}
\displaystyle \frac{Z^{n+1}- Z^{n}}{\delta t}+\mathcal A_h Z^{n+1}=\mathcal B_h q^{n+1}, \quad n\in\inter{0,M-1}\\
Z^0=Z_h^0,
\end{cases}
\end{equation}
where $(Z^n)_{n\in\inter{0,M}}\subset \mathbb R^{2}\otimes \mathbb R^{N}$ and $(q^n)_{n\in\inter{1,M}}\subset \mathbb R$ are the (discrete) state and control variables, respectively, $Z_h^0\in \mathbb R^{2}\otimes \mathbb R^{N}$ is the approximation of the initial data $(v_0,w_0)$, the (boundary) control operator $\mathcal B_h\in{\mathbb R^2\otimes R^N}$ is given by
\begin{equation*}
\mathcal B_h=\frac{1}{h^2}\left(\begin{array}{c}1 \\ 0\end{array}\right)\otimes \left(\begin{array}{c}0 \\0 \\\vdots \\1\end{array}\right) _{N},
\end{equation*}
and $\mathcal A_h\in \mathbb R^{2\times 2}\otimes\mathbb R^{N\times N}$ is
\begin{equation*}
\mathcal A_h= \left(\begin{array}{cc} 1 & 0 \\ 0& 0 \end{array}\right)\otimes \mathcal A_{h,D} + \mathsf C \otimes I_{N\times N},
\end{equation*}
where $\mathcal A_{h,D}\in \mathbb R^{N\times N}$ is the usual tridiagonal matrix coming from the discretization of the Laplacian operator $-\partial_{xx}$ with homogeneous Dirichlet boundary conditions, that is, $(\mathcal A_h y)_{i}=-\frac{1}{h^2}(y_{i+1}-2y_{i} +y_{i-1})$, $i=\inter{1,N}$ and $\mathsf C=\left(\begin{array}{cc} a & \rho \\ -\gamma & \delta \end{array}\right)$ are the coupling coefficients.

Now, let us turn our attention to the control part. Recall that the backstepping control is given by the explicit feedback control law
\begin{equation}\label{control_num}
    q(t) = \int_0^1 k(1,y)v(t,y) \, dy
\end{equation}
where \( k = k(x,y) \) is the solution to \eqref{kernel}. This particular structure simplifies the implementation of the control. In fact, we can approximate \eqref{control_num} directly at a time-grid point \( t_n \) (i.e., \( q(t_n) = q^n \)) with the formula 
\begin{equation}\label{eq:kernel_discrete}
    q^n = h \sum_{i=1}^{N} k(1,x_i) v_{i}^{n}.
\end{equation}

\begin{rem}
There are, of course, different ways to discretize the integral in \eqref{control_num}. For instance the trapezoidal rule would slightly modify \eqref{eq:kernel_discrete}, specifically,  
\begin{equation*}
    q^n = \frac{h}{1 - \frac{h\varpi}{2}} \sum_{i=1}^{N} k(1,x_i)v_i^n, \quad n \in \inter{1,M}.
\end{equation*}
where \( \varpi = a - {\lambda} < 0 \). We opted for \eqref{eq:kernel_discrete} due to its simplicity in numerical implementation. 
\end{rem}

Note that \eqref{eq:kernel_discrete} can be written in compact form and using the complete state variable $Z$ by introducing the vector $K\in\mathbb R^{2}\otimes \mathbb R^{N}$ defined by \begin{equation}\label{KK}
K^\top:=\left(\begin{array}{cccccccc} k_{1,1} & k_{1,2} &\dots & k_{1,N} & 0 & 0 & \cdots & 0 \end{array}\right)
\end{equation} where $k_{1,i}:=k(1,x_i)$. More precisely, 
\begin{equation}\label{eq:feedback_control}
q^n= {h} K^{\top}Z^n, \quad n\in\inter{1,M}.
\end{equation}
Thus, combining \eqref{num_scheme} and \eqref{eq:feedback_control}, the feedback control system simplifies to

\begin{equation}\label{num_scheme_feed}
\begin{cases}
    \displaystyle \frac{Z^{n+1} - Z^n}{\delta t} + \left( \mathcal{A}_h - {h} \mathcal{B}_h K^\top \right) Z^{n+1} = 0, \quad n \in \inter{0,M-1}, \\
    Z^0 = Z_h^0.
\end{cases}
\end{equation}

\subsubsection{Numerical illustration of the backstepping control}

To illustrate the behavior of the controlled system \eqref{eq:heat_memory_simple}, let us choose the following parameters: we set 
\begin{equation}\label{param}
T=6, \quad  v_0(x)=\sin(\pi x), \quad w_0(x)=\sin(2\pi x),
\end{equation} 
and the coupling matrix 
\begin{equation}\label{coupling}
\mathsf C=\left(\begin{array}{cc}-11 & 1 \\ -1 & 1 \end{array}\right),
\end{equation}
where we choose the entries in the matrix $\mathsf C$ according to the constraints \eqref{positve}.

With these parameters, it can be verified that the uncontrolled system (setting $q\equiv 0$) is unstable, as shown in \Cref{fig:uncontr_fig}. Such figures have been obtained with the help of the numerical scheme \eqref{num_scheme_feed} with parameters $N=40$, $M=2000$\footnote{These simulation parameters will be used in the remainder of the simulations shown in this section. } and setting $\mathcal B_h\equiv 0$ for observing the uncontrolled dynamics. 
\begin{figure}[htbp!]
    \centering
    \begin{subfigure}{0.35\textwidth}
        \includegraphics[width=\textwidth]{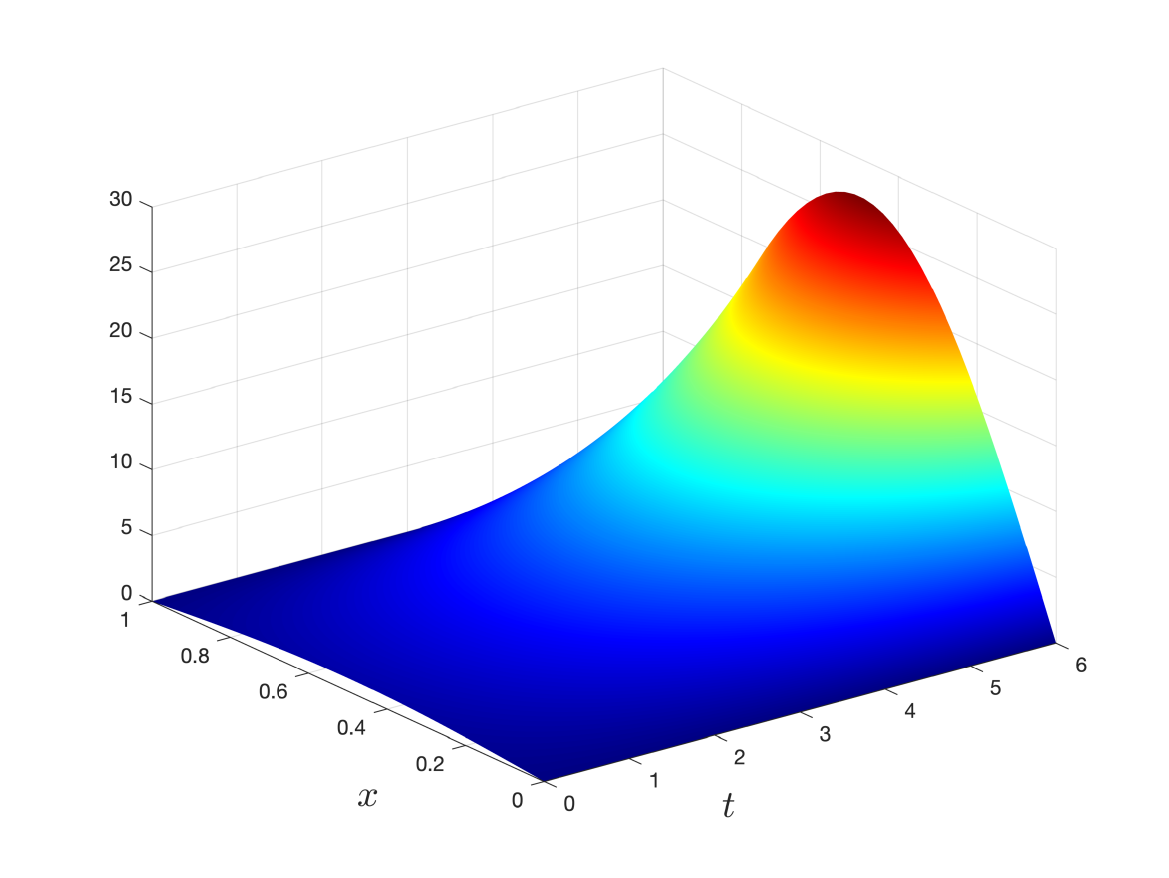} % Use PDF format
        \caption{The PDE component $(t,x)\mapsto v(t,x)$.}
        \label{fig:sub1}
    \end{subfigure}
    \qquad 
    \begin{subfigure}{0.35\textwidth}
        \includegraphics[width=\textwidth]{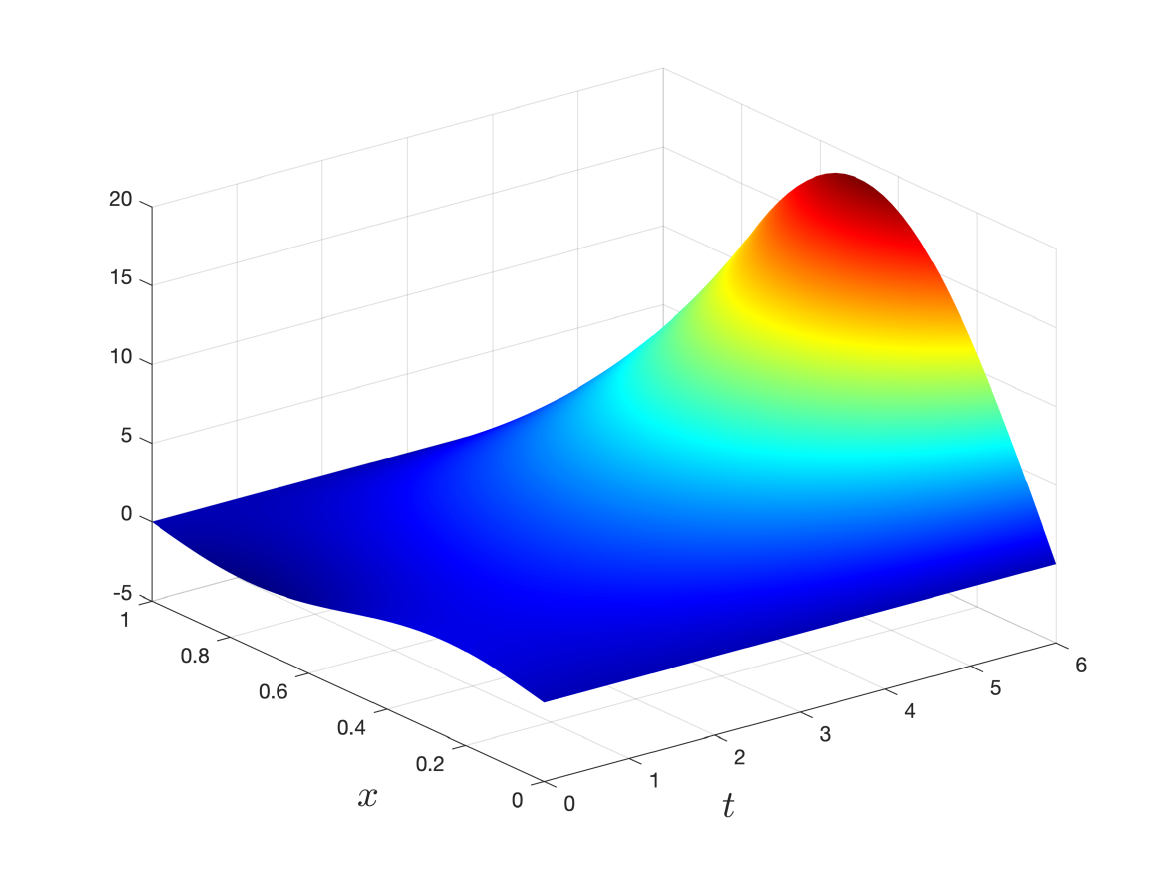} % Use PDF format
        \caption{The ODE component $(t,x)\mapsto w(t,x)$.}
        \label{fig:sub2}
    \end{subfigure}
    \caption{Evolution in time of the uncontrolled system \eqref{eq:heat_memory_simple}.}
    \label{fig:uncontr_fig}
\end{figure}

Now, we choose ${\lambda}=1$, compute $K$ as in \eqref{KK} with the help of formula \eqref{kernel bes} and use the numerical scheme \eqref{num_scheme_feed} to illustrate the behavior of the closed-loop system, see \Cref{fig:contr_system}. We see that by applying the boundary control shown in \Cref{subcontrol} the system becomes stable.

\begin{figure}[htbp!]
    \centering
    % First row with two subfigures
    \begin{subfigure}{0.35\textwidth}
        \includegraphics[width=\textwidth]{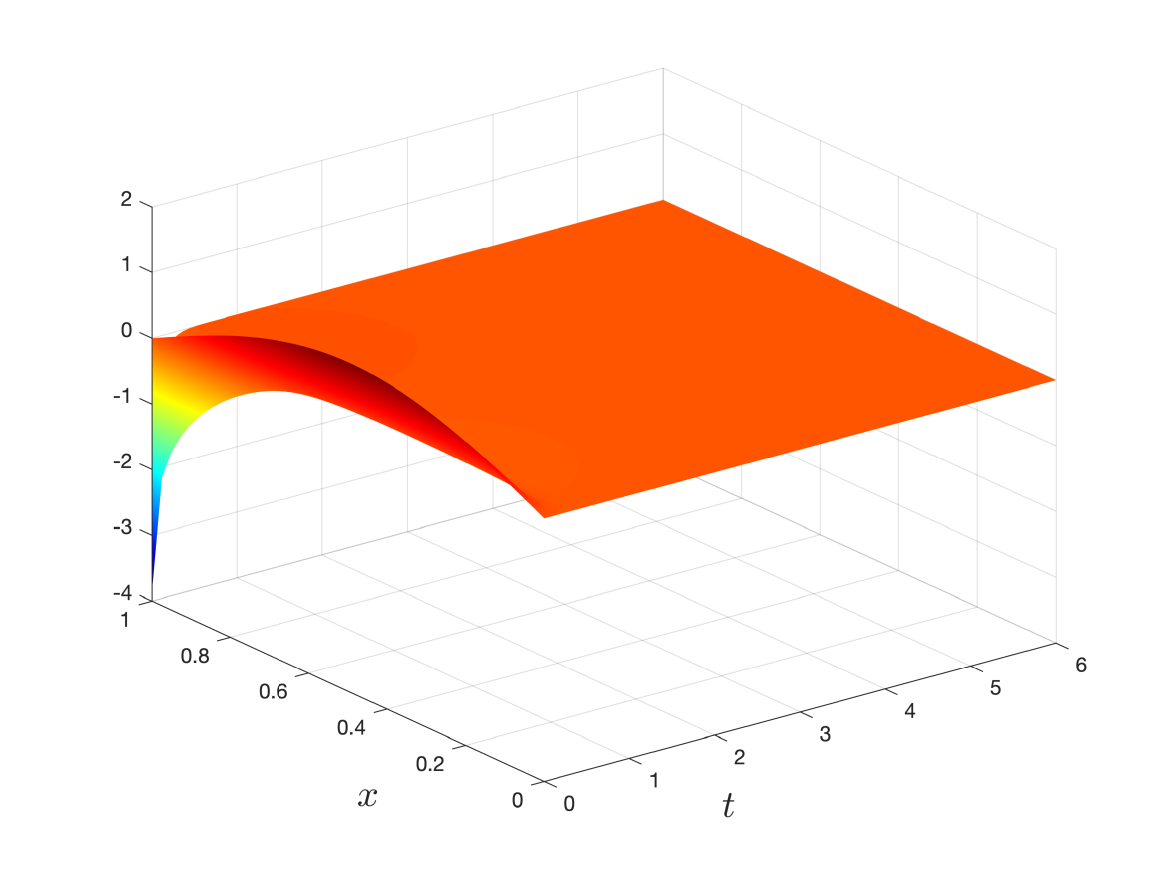} % Replace with your figure path
        \caption{The PDE component $(t,x)\mapsto v(t,x)$.}
        \label{fig:sub1_ctr}
    \end{subfigure}
    \qquad % Space between subfigures
    \begin{subfigure}{0.35\textwidth}
        \includegraphics[width=\textwidth]{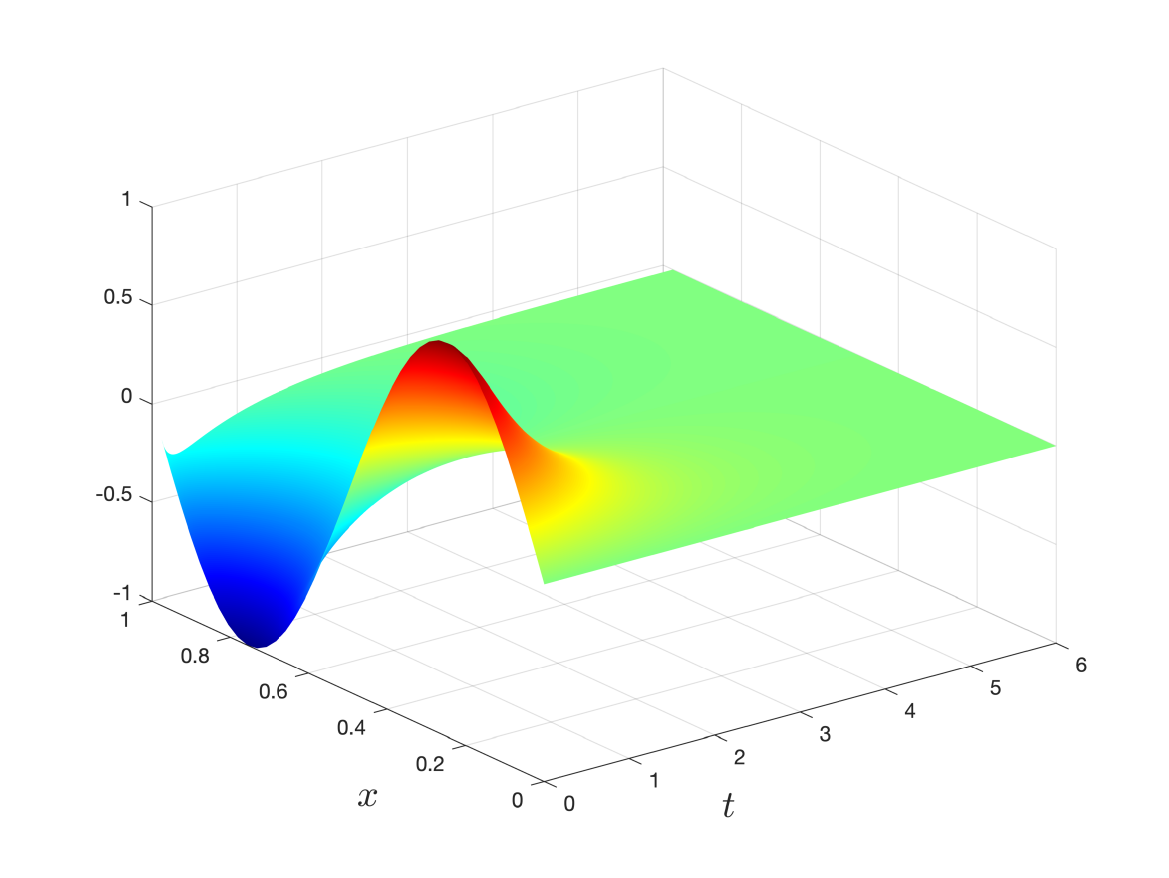} % Replace with your figure path
        \caption{The ODE component $(t,x)\mapsto w(t,x)$.}
        \label{fig:sub2_ctr}
    \end{subfigure}   \\
     % Second row with one centered subfigure
    \vspace{0.8cm} % Optional space between rows
    \begin{subfigure}{0.5\textwidth}\centering
      	\includegraphics{./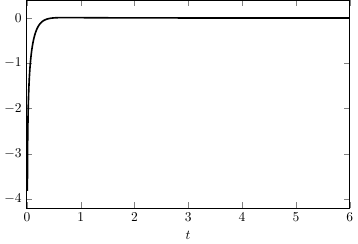} %
\caption{The control $t\mapsto q(t)$.}\label{subcontrol}
    \end{subfigure} 
    % Overall caption for the entire figure
    \caption{Evolution in time of the controlled system \eqref{eq:heat_memory_simple} with backstepping control \eqref{control_num}.}
    \label{fig:contr_system}
\end{figure}

Unlike other similar backstepping control problems, as noted in \cite{chowdhury2024local}, the best achievable exponential rate for our closed-loop system with backstepping control is determined by the decay of the ODE component in \eqref{eq:heat_memory_simple}. In other words, we cannot attain an arbitrary exponential decay rate regardless of the control design. To illustrate this, we have computed feedback controls with different design parameters ${\lambda}$, and compared the dynamics, as shown in \Cref{fig:comparison_lambda}. Even though the dynamics are distinct at the very beginning of the time interval, after some point, all dynamics converge and become almost identical. The figure effectively illustrates this convergence behavior.

\begin{figure}[htbp]
	\centering
	\includegraphics{./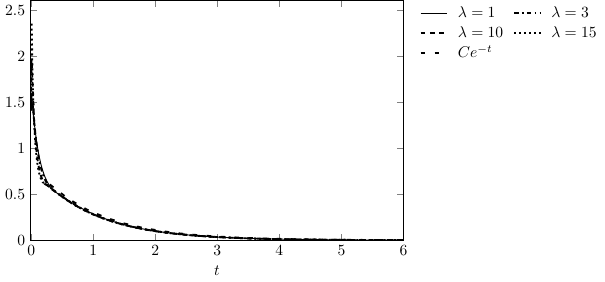} %
\caption{Norm of the closed-loop dynamics $t\mapsto \|v(t)\|_{L^2}+\|w(t)\|_{L^2}$ of \eqref{eq:heat_memory_simple} with different design parameters $ \lambda$. For comparison, we have added the exponential function $Ce^{-t}$ for some $C>0$, which is the best theoretical decay rate for system \eqref{eq:heat_memory_simple} with coupling parameters \eqref{coupling}. }
\label{fig:comparison_lambda}
\end{figure}

\subsection{Numerical illustration of the event-triggering control}

With the discrete system given in \eqref{num_scheme} and the backstepping control law defined by \eqref{eq:feedback_control}, we can easily adapt our computation tool to address the event-triggering problem. For clarity and future reference, we present a brief pseudocode in \Cref{alg:event_triggered_backstepping} that outlines the essential steps. 

Using such algorithm, along with the system parameters specified in \eqref{param} and \eqref{coupling}, we aim to stabilize the free dynamics via event-triggered control. In this context, let $\epsilon=0.05$, then the parameter $\beta$ in \Cref{main_theorem} must be carefully selected. 

A quick computation shows that $\vartheta=30.0206$ according to \eqref{vartheta}, $\|\Pi^{-1}\| = 2.2302$ and $\|k\| = 6.5968$ if $\lambda=1$, so choosing $\beta = 0.001$ ensures \eqref{beta}. In \Cref{fig:contr_system_events}, we illustrate the time evolution of the system under event-triggered control. Notably, the small magnitude of $\beta$ leads to frequent triggering, resulting in a control strategy that closely resembles continuous control (compare \Cref{details} and \Cref{subcontrol}). However, upon closer inspection, we can observe distinct triggers, with the control remaining constant over certain time intervals. Also, the similitude in the control translates into a nearly indistinguishable behavior in the system's states.

\begin{figure}

\begin{subfigure}{0.35\textwidth}
        \includegraphics[width=\textwidth]{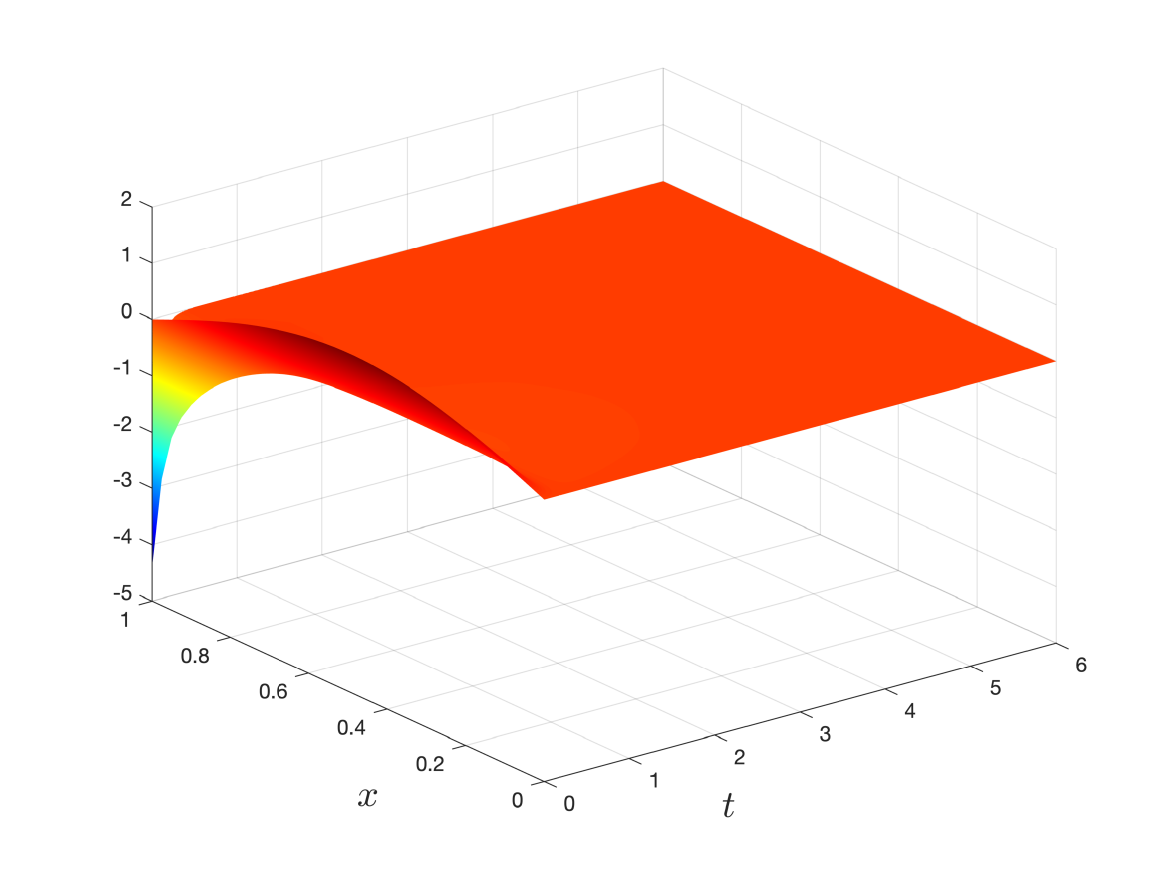} % Replace with your figure path
        \caption{The PDE component $(t,x)\mapsto v(t,x)$.}
        \label{fig:sub1_event}
    \end{subfigure}
    \qquad % Space between subfigures
    \begin{subfigure}{0.35\textwidth}
        \includegraphics[width=\textwidth]{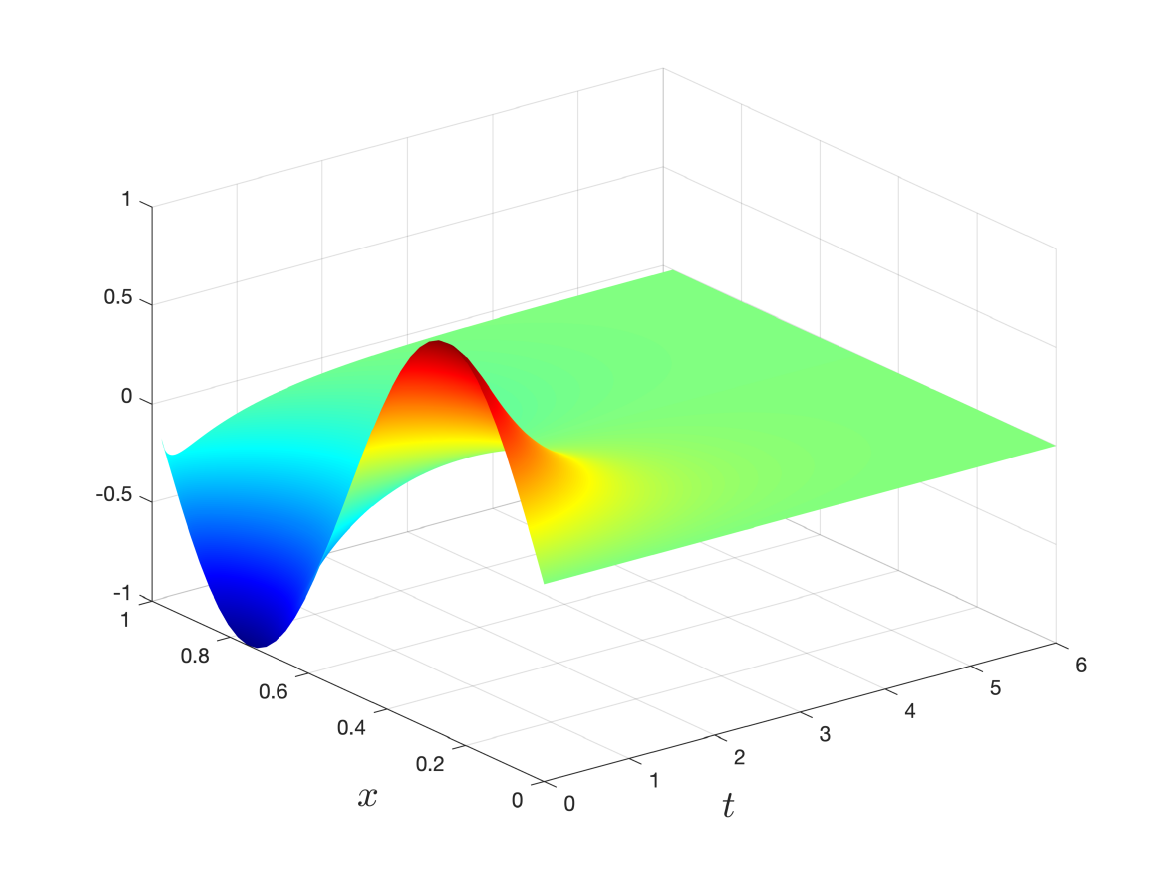} % Replace with your figure path
        \caption{The ODE component $(t,x)\mapsto w(t,x)$.}
        \label{fig:sub2_event}
    \end{subfigure}   \\
     % Second row with one centered subfigure
    \vspace{0.8cm} % Optional space between rows
    \begin{subfigure}{0.55\textwidth}
    \includegraphics{./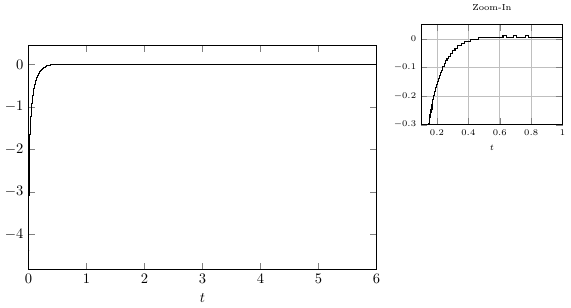} %
\caption{Event-triggered control and zoom-in.}\label{details}
\end{subfigure}
\caption{Evolution in time of the controlled system \eqref{eq:heat_memory_simple} with event-triggered control with design parameter $\beta=0.001$.}
    \label{fig:contr_system_events}
\end{figure}

We conclude this section by noting that the parameter $\beta$ can actually be overestimated. From \Cref{optimal bd}, note that we are not providing an optimal expression for the parameter $\vartheta$, but only an upper bound. In \Cref{fig:control_trigger_less}, we present another event-triggered control, this time with $\beta = 0.05$, which does not satisfy the conditions of \Cref{main_theorem}. It can be observed that this control triggers far fewer times, especially at the beginning of the time interval, yet still achieves stabilization of the entire system. The similarity of the controlled states to those in previous experiments remains very close, and to avoid redundancy, we omit additional figures.

%%%%%%%%%%%%%%%%%%%%%%%%%%%%%
%%%% Do not erase
%%%% For compilation purposes
%%%%%%%%%%%%%%%%%%%%%%%%%%%%%
%\begin{figure}[htbp!]
%	\centering
%	%	%%% Data for h^4 and a=3.1931469
%	
%      	\pgfplotstableread{results_back/control_backstepping_01-10-2024_10h43.org}\outs    
%		
%		\begin{tikzpicture}[scale=0.65]
%    \begin{axis}[
%        xlabel={$t$}, xmin=-0.01, xmax=6, width=10cm, height=7cm,
%        legend style={
%            at={(0.5,-0.2)}, % Move the legend below the plot
%            anchor=north,
%            legend columns=2, % Arrange entries in 2 columns
%            column sep=1ex, % Space between columns
%            draw=none, % Remove border around legend
%            cells={anchor=west} % Align text to the left of symbols
%        }
%    ]
%        % Plot data with legend entries
%        \addplot[const plot, semithick, color=black] table[x=t, y=u] \outs; 
%    \end{axis}
%\end{tikzpicture}  
%\caption{$\beta=0.05$.}
%\label{fig:control_trigger_less}
%\end{figure}

\begin{figure}[htbp!]
	\centering
	    \includegraphics{./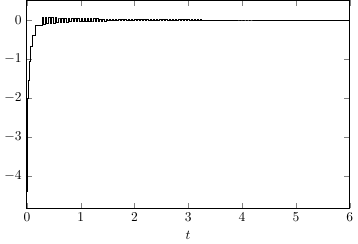} %
	\caption{Event-triggered control with design parameter $\beta=0.05$.}
\label{fig:control_trigger_less}
\end{figure}

\begin{algorithm}
\caption{Event-triggered control with backstepping}\label{alg:event_triggered_backstepping}
\begin{algorithmic}[1]
\State \textbf{Input:} Initial (discrete) states $(v_0,w_0)$; discrete parameters $h$, $\delta t$ (resp. $N$, $M$); matrices $\mathsf C$, $\mathcal A_h$, $\mathcal B_h$, $K$; time $T>0$, parameter $\beta$
\State \textbf{Initialize:} 
\State \hskip1.5em Set $v^0 \gets u_0$, $w^0 \gets w_0$ and arrange in vector form  $Z^0 \gets \left(\begin{array}{c} v^0 \\ w^0\end{array}\right)$
\State \hskip1.5em Set $z(t_0) \gets Z^0$
\State \hskip1.5em Compute norm: $\|K\|\gets \sqrt{h K^\top K}$
\For{$i = 0$ to $M-1$}
    \State Update state with \eqref{num_scheme} and control $q^{i+1}=h K^Tz(t_i)$: $$Z^{i+1} \gets (I+\delta t\mathcal A_h)^{-1} \left(Z^{i} + \delta t \mathcal B_h z(t_i)  \right)$$
    \State Set $z(t) \gets Z^{i+1}$
    
    \State Compute difference: $d(t) \gets h K^T (z(t_i) - z(t))$
    
    \State Split the vectors in components: $\left(\begin{array}{c} v(t) \\ w(t)\end{array}\right) \gets z(t)$, $\left(\begin{array}{c} v(t_i) \\ w(t_i)\end{array}\right) \gets z(t_i)$
    
    \State Compute norms: $\|v(t)\|\gets \sqrt{h v(t)^\top v(t)}$ (similarly for variables $w(t)$, $v(t_i)$, $w(t_i)$)
    
    \State Compute triggering threshold:
    \[
    \text{quotient} \gets \beta \|K\| \left( \|v(t)\|+\|w(t)\|+\|v(t_i)\|+\|w(t_i)\|\right)
    \]
    
    \If{$|d| > \text{quotient}$}
        \State Update variable: $z(t_{i+1}) \gets Z^{i+1}$
    \Else 
    	\State Keep variable: $z(t_{i+1})\gets z(t_i)$
    \EndIf
\EndFor
\State \textbf{Output:} Final controlled solution $Z=(Z^n)_{n\in\inter{0,M}}$ and event-triggered control $q=(q^n)_{n\in\inter{1,M}}$.
\end{algorithmic}
\end{algorithm}

\section*{Acknowledgements}

Luz de Teresa is grateful to Lucie Baudouin for introducing her to the event-triggering strategies and their applications in the control of PDEs.

\bibliographystyle{alpha}
\bibliography{mybibfile}
\end{document}